\newcommand{\srt}[2]{\!\sqrt[\raisebox{0.2ex}{$\scriptstyle{#1\,\,}$}]{#2}\,}
\newcommand{\Q}{{\mathbb Q}}
\newcommand{\R}{{\mathbb R}}
\newcommand{\N}{{\mathbb N}}
\newcommand{\Z}{{\mathbb Z}}
\newcommand{\OO}{\mathcal O}
\newcommand{\fp}{\mathfrak p}
\newcommand{\fa}{\mathfrak a}
\newcommand{\fb}{\mathfrak b}
\newcommand{\tH}{\widehat{\rm H}}
\newcommand{\Ram}{\operatorname{Ram}}
\newcommand{\Am}{\operatorname{Am}}
\newcommand{\pAm}{\operatorname{Am}_p}
\newcommand{\Cl}{\operatorname{Cl}}
\newcommand{\pCl}{{}_p\!\Cl}
\newcommand{\NCl}{{}_N\!\Cl}
\newcommand{\nCl}{{}_\nu\!\Cl}
\newcommand{\Gal}{\operatorname{Gal}\,}
\newcommand{\rank}{\operatorname{rank}\,}
\newcommand{\prank}{\operatorname{rank}_p\,}
\newcommand{\eps}{\varepsilon}
\newcommand{\la}{\langle}
\newcommand{\ra}{\rangle}
\newcommand{\hra}{\hookrightarrow}
\newcommand{\lra}{\longrightarrow}
\newcounter{Tc}
\newcounter{Pc}
\newcounter{Lc}
\newtheorem{thm}[Tc]{Theorem}
\newtheorem{prop}[Pc]{Proposition}
\newtheorem{cor}{Corollary}
\theoremstyle{definition}
\newtheorem*{rem}{Remark}
\title{Ideal class groups of cyclotomic number fields II}
\author{Franz Lemmermeyer}
\address{M\"orikeweg 1 \\73489 Jagstzell \\Germany}
\email{hb3@ix.urz.uni-heidelberg.de}
\subjclass{Primary 11 R 21; Secondary 11 R 29, 11 R 18}
\begin{document}

\abstract
We first study some families of maximal real subfields of cyclotomic 
fields with even class number, and then explore the implications
of large  plus class numbers of cyclotomic fields. We also discuss
capitulation of the minus part and the behaviour of $p$-class groups
in cyclic ramified $p$-extensions.
\endabstract

\maketitle

This is a continuation of \cite{L1}; parts I and II are
independent, but will be used in part III.

\section{The $2$-Class Group}

Let $h(m)$ and $h^+(m)$ denote the class number of
$\Q(\zeta_m)$ and $\Q(\zeta_m + \zeta_m^{-1})$,
respectively, and put $h^-(m) = h(m)/h^+(m)$.
In this section we will show how results on the $2$-class field
tower of quadratic number fields can be used to improve results
of {\sc Stevenhagen} \cite{Stev} on the parity of $h^+(m)$ for 
certain composite $m$ with few prime factors.

\begin{prop}\label{Ptwo}
Let $p \equiv q \equiv 1 \bmod 4$ be primes, put 
$L = \Q(\zeta_{pq})$, and let $K$ and $K^+$ be the maximal 
$2$-extensions contained in $L$ and $L^+ = L \cap \R$,
respectively.
\begin{enumerate}
\item $2 \mid h(K^+)$ if and only if $(p/q) = 1$;
\item if $(p/q) = 1$ and $(p/q)_4 = (q/p)_4$, then $2 \mid h(F)$ 
	for every subfield $F \subseteq L$ containing 
	$\Q(\sqrt{pq}\,)$;
\item if $(p/q)_4 = (q/p)_4 = +1$, then $4 \mid h(K^+)$.
\end{enumerate}
\end{prop}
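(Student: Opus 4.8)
The plan is to deduce the divisibility by transporting the Hilbert class field of the real quadratic field $k=\Q(\sqrt{pq}\,)$ up to $K^+$. First I record the field-theoretic picture, as already used for (1): since $p\equiv q\equiv 1\bmod 4$, the three quadratic subfields $\Q(\sqrt p\,),\Q(\sqrt q\,),\Q(\sqrt{pq}\,)$ of $L$ are totally real and lie in $K^+$, so $K^+$ contains the biquadratic field $M=\Q(\sqrt p,\sqrt q\,)$. Moreover $K^+/\Q$ is abelian (being a subfield of the abelian field $L$), totally real, and ramified only at $p$ and $q$, both tamely. Comparing ramification indices, $e_p$ equals $2$ in both $M$ and $k$, and likewise for $q$, so $M/k$ is unramified at all finite primes; both fields being totally real, it is unramified at the infinite primes as well. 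Thus $M$ is the genus field of $k$ and sits inside the (wide) Hilbert class field $H$ of $k$.

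The arithmetic input is the refinement of genus theory for $k=\Q(\sqrt{pq}\,)$ recalled at the start of the section: its $2$-class group is cyclic (there are exactly two ramified primes, so the $2$-rank is $1$), and $4\mid h(k)$ iff $(p/q)=1$, while $8\mid h(k)$ iff in addition $(p/q)_4(q/p)_4=+1$. The hypothesis $(p/q)_4=(q/p)_4=+1$ forces $(p/q)=(p/q)_4^{\,2}=1$ and $(p/q)_4(q/p)_4=+1$, so both conditions hold and the $2$-part of $h(k)$ is cyclic of order at least $8$. Since $[H:k]=h(k)$ and $[M:k]=2$, this gives $4\mid[H:M]$.

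Now I transport $H$ to $K^+$. As $H/k$ is unramified at every place and $k\subseteq K^+$, the compositum $K^+H/K^+$ is again unramified everywhere (unramifiedness is stable under base change, and all fields remain totally real), so $K^+H$ lies in the Hilbert class field of $K^+$ and $[K^+H:K^+]=[H:H\cap K^+]$ divides $h(K^+)$. It therefore suffices to prove $H\cap K^+=M$, for then $4\mid[H:M]=[K^+H:K^+]$ divides $h(K^+)$. This is where the abelian-versus-dihedral dichotomy enters: $H/\Q$ is Galois, and because the nontrivial automorphism of $k/\Q$ acts on $\Cl(k)$ by inversion, $\Gal(H/\Q)$ is (generalized) dihedral with cyclic rotation subgroup $\Gal(H/k)$. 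A commutator computation gives $[\sigma,a]=a^{-2}$, so the commutator subgroup is the subgroup of squares in $\Gal(H/k)$, which has index $4$ in $\Gal(H/\Q)$; hence the maximal subfield of $H$ abelian over $\Q$ is exactly the quadratic unramified extension $M$ of $k$. Since $H\cap K^+$ is a subfield of $H$ abelian over $\Q$ that already contains $M$, it equals $M$, and $4\mid h(K^+)$ follows.

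The main obstacle is the arithmetic input of the second paragraph, namely the precise jump from $4\mid h(k)$ to $8\mid h(k)$ governed by the rational quartic residue symbols; this rests on R\'edei's reciprocity and is the one genuinely number-theoretic (rather than group-theoretic) ingredient, which I would quote from the results on $2$-class groups of quadratic fields recalled at the beginning of the section. A secondary but delicate point is the bookkeeping of narrow versus wide class numbers: I must ensure that the cited divisibility and the Hilbert class field $H$ are taken in the same (wide) sense, which is automatic here because $M$ and all fields in sight are totally real, so the relevant genus and Hilbert class fields are unramified at the infinite places.
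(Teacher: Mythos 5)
Your proposal addresses only part (3) of the proposition. Nothing in it proves part (1) --- in particular the ``only if'' direction, that $(p/q)=-1$ forces $h(K^+)$ to be odd, which needs a completely different tool (the ambiguous class number formula, applied first to the $2$-extension $K_p^+/\Q$ and then to $K_{pq}^+/K_p^+$, as in Washington's Theorem 10.4) --- and nothing proves part (2), which concerns \emph{every} subfield $F\subseteq L$ containing $\Q(\sqrt{pq}\,)$, including the complex ones and the weaker hypothesis $(p/q)_4=(q/p)_4=-1$, to which your compositum-with-$K^+$ argument does not apply. So even if your argument for (3) were complete, the proposition would not be proved.

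More importantly, the argument you give for (3) has a genuine gap, and it sits exactly at the point you dismiss as ``automatic'': the narrow-versus-wide bookkeeping. The R\'edei--Reichardt and Scholz theorems you invoke are statements about the \emph{narrow} class group of $k=\Q(\sqrt{pq}\,)$, equivalently about extensions unramified outside $\infty$. As statements about the wide class number your quoted input is simply false: for $p=13$, $q=17$ one has $(13/17)=1$ but $h(\Q(\sqrt{221}\,))=2$, because the fundamental unit has norm $+1$ and the wide class number is half the narrow one. In the situation of (3), Scholz gives $8\mid h^+(k)$; but the hypothesis $(p/q)_4=(q/p)_4=+1$ does \emph{not} force $N\eps=-1$ (this is precisely the case of Scholz's theorem where both signs occur; e.g.\ for $p=13$, $q=61$ both quartic symbols equal $+1$, yet the continued fraction of $\sqrt{793}$ has even period, so $N\eps=+1$). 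Whenever $N\eps=+1$, your key claim $8\mid h(k)$ would require $16\mid h^+(k)$, which Scholz does not provide; when $h_2^+(k)=8$ the claim fails outright, your wide Hilbert class field $H$ satisfies only $2\mid [H:M]$, and the argument yields only $2\mid h(K^+)$. The repair forces you into the narrow class field of $k$, and then the behaviour at the infinite places \emph{is} the whole problem: one needs Scholz's criterion for when the cyclic quartic (resp.\ octic) extension of $k$ unramified outside $\infty$ is totally real, and, when it is complex, a separate device --- composing with the CM extension $K/K^+$ and passing to the totally real quadratic (resp.\ quartic) subextension --- to manufacture extensions of $K^+$ unramified also at infinity. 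That, and not the dihedral computation of $H\cap K^+$ (which is correct and is the same mechanism the paper uses), is the actual substance of the proof; it cannot be bypassed by the wide-class-field shortcut.
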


\begin{proof}
By a result of {\sc R\'edei} and {\sc Reichardt} \cite{Re,RR33}, 
the quadratic number field $k = \Q(\sqrt{pq}\,)$ admits a cyclic 
quartic extension $F/k$ which is unramified outside $\infty$ 
and which is normal over $\Q$ with $\Gal(F/\Q) \simeq D_4$, 
the dihedral group of order $8$. The last property guarantees 
that $F$ is either totally real or totally complex; {\sc Scholz}
\cite{Sch} has shown that $F$ is real if and only if
$(p/q)_4 = (q/p)_4$.

Assume that $F$ is real; then, for every subfield $M$ of $K^+$ 
containing $\Q(\sqrt{pq}\,)$, the extension $FK^+/K^+$ is 
unramified everywhere and is cyclic of degree $2$ (if
$\Q(\sqrt{p},\sqrt{q}\,) \subseteq M$) or $4$ (otherwise);
by {\sc Hilbert}'s theorem 94 this implies that the class number
of $M$ is even. 

If $F$ is totally complex, we consider the field $K^+$. In this
case, $K$ and $FK^+$ are totally complex quadratic extensions 
of $K^+$ which are unramified at the finite primes.
Let $N$ be the quadratic subextension of $FK/K^+$
different from $K$ and $FK^+$. Then $N$ is totally real and
unramified at the finite primes, and we see that $K^+$ has
even class number. 

Finally, if $(p/q)_4 = (q/p)_4 = +1$, then $k$ admits a cyclic
octic extension which is unramified outside $\infty$ and
normal over $\Q$; the same proof as above shows that
$4 \mid h(K^+)$.

The fact that $K^+$ has odd class number if $(p/q) = -1$
is given as Exercise 10.4 in \cite{Wa}; here is a short proof:
since only $p$ ramifies in the $2$-extension $K_p^+/\Q$, Theorem 10.4.
in \cite{Wa} (this is a very special case of the ambiguous class
number formula) says that $K_p^+$ has odd class number.
Now $K_p^+/\Q$ is cyclic, and $\Q(\sqrt{p}\,)$ is its unique
quadratic subfield. Since $q$ is inert in $\Q(\sqrt{p}\,)$,
we conclude that $\Q$ must be the decomposition field of $q$,
i.e. $q$ is inert in $K_p^+/\Q$. Thus $q$ is the only 
ramifying prime in $K_{pq}^+/K_p^+$, and again Theorem 10.4
proves our claim.

The fact that $h_2(K^+) = 1$ if $(p/q) = -1$ also follows from 
a result of {\sc Milgram} \cite{Mil} and the class number formula.
\end{proof}

The idea behind this proof can be found in 
{\sc van der Linden}'s paper \cite{vdL}. Our next result 
strengthens a result of {\sc Cornell} and {\sc Washington} 
\cite{CW}, who showed that $h^+(m)$ is even if $m$ is 
divisible by at least four primes $\equiv 1 \bmod 4$:

\begin{prop}
Let $m$ be an integer divisible by three distinct primes 
$\equiv 1 \bmod 4$; then $2 \mid h^+(m)$. 
\end{prop}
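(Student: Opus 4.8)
The plan is to deduce the result from Proposition~\ref{Ptwo} by producing a nontrivial everywhere-unramified (in particular totally real) quadratic extension of $M = \Q(\zeta_m)^+$; by class field theory this forces $2 \mid h^+(m)$. Write $p,q,r$ for three distinct prime divisors of $m$ with $p \equiv q \equiv r \equiv 1 \bmod 4$. The structural observation making everything work is that the dihedral R\'edei--Reichardt--Scholz extensions used in Proposition~\ref{Ptwo} \emph{survive base change to any abelian field}: if $F/\Q$ is normal with $\Gal(F/\Q) \simeq D_4$ and $F$ is unramified at the finite primes over its quadratic subfield $k \subseteq M$, then $FM/M$ is again unramified at all finite primes, and it is nontrivial because $F$, being non-abelian over $\Q$, cannot lie in the abelian field $M$. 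This is exactly what lets one pass from $\Q(\zeta_{pq})$ in Proposition~\ref{Ptwo} to the larger field $\Q(\zeta_m)^+$, and it reduces the whole statement to exhibiting one such surviving extension built from $p,q,r$.

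First I would dispose of the case in which some pair, say $p,q$, satisfies $(p/q)=1$. Then Proposition~\ref{Ptwo} supplies a cyclic quartic extension $F/\Q(\sqrt{pq}\,)$, unramified outside $\infty$ and dihedral over $\Q$. If $F$ is real (the Scholz condition $(p/q)_4=(q/p)_4$), then $FM/M$ is a real quadratic extension, unramified everywhere, and $2 \mid h^+(m)$ follows at once. If $F$ is complex, I would run the reflection argument of Proposition~\ref{Ptwo} with $\Q(\zeta_m)$ in place of $K$: since $m$ has three distinct prime factors it is not a prime power, so $\Q(\zeta_m)/M$ is unramified at all finite primes, and $FM$ and $\Q(\zeta_m)$ are two distinct complex quadratic extensions of $M$ that are unramified at the finite primes. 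Complex conjugation acts nontrivially on both, hence trivially on the third quadratic subfield $N$ of their compositum; thus $N/M$ is totally real and unramified everywhere, again giving $2 \mid h^+(m)$.

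The hard case, and the main obstacle, is when all three pairwise symbols satisfy $(p/q)=(q/r)=(r/p)=-1$; such triples exist (for instance $5,13,37$), so the statement cannot be reduced to the two-prime situation. Here no pair admits a R\'edei--Reichardt quartic, and the quadratic subfields of $\Q(\zeta_{pqr})$ contribute only genus classes, which are abelian over $\Q$ and therefore absorbed into $M$ under base change. To obtain a surviving extension I would instead invoke the $2$-class field tower of $k = \Q(\sqrt{pqr}\,)$: genus theory gives $\trank \Cl(k) = 2$ with genus field $k_{\mathrm{gen}} = \Q(\sqrt p,\sqrt q,\sqrt r\,) \subseteq M$, so the problem becomes to show that $k_{\mathrm{gen}}$ itself has even class number. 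The point at which the real work concentrates is precisely this: one must verify that the tower does not stop at the first step and that the resulting unramified extension of $k_{\mathrm{gen}}$ can be chosen non-abelian over $\Q$, so that it is not contained in $M$ and remains totally real and unramified after base change.

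This last step is exactly where the results on $2$-class field towers of quadratic fields advertised in the introduction are meant to enter, and it is the delicate part of the proof: controlling the non-genus (second-step) part of the $2$-class field tower attached to three primes with all pairwise non-residues, and confirming that the extension it yields is real and survives into the cyclotomic tower. Once such an extension is in hand, the base-change principle of the first paragraph finishes the argument uniformly, as in the two other cases, and combining the three cases establishes $2 \mid h^+(m)$.
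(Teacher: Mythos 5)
Your first two paragraphs are sound and essentially reproduce the paper's treatment of the easy cases: when some pair satisfies $(p/q)=1$, the R\'edei--Reichardt--Scholz quartic $F$ either is real (direct base change) or is complex, in which case the reflection through the CM-extension $\Q(\zeta_m)/\Q(\zeta_m)^+$ produces a totally real unramified quadratic extension. That matches the paper. The problem is the third case, $(p/q)=(q/r)=(r/p)=-1$, which you correctly identify as the crux --- and then do not prove. You reduce it to the assertion that the $2$-class field tower of $k=\Q(\sqrt{pqr}\,)$ continues past the genus field $\Q(\sqrt p,\sqrt q,\sqrt r\,)$ with an extension that is non-abelian over $\Q$ and of controlled signature, and you explicitly defer ``the real work'' to unnamed ``results on $2$-class field towers.'' A deferral is not a proof: nothing in your text shows that $k_{\mathrm{gen}}$ has even class number, nor that a resulting unramified extension avoids being absorbed into $M$, nor how to handle the complex case without normality over $\Q$ (which is what makes ``totally real or totally complex'' available, and which an arbitrary unramified quadratic extension of $k_{\mathrm{gen}}$ need not satisfy).

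The missing ingredient is a concrete existence theorem. The paper fills exactly this hole by citing \cite{Lem96}: when $p\equiv q\equiv r\equiv 1 \bmod 4$ and all three pairwise Legendre symbols equal $-1$, the field $\Q(\sqrt{pqr}\,)$ admits an unramified \emph{quaternion} extension $L$ that is normal over $\Q$. Normality over $\Q$ forces $L$ to be totally real or totally complex; in the real case $LK^+/K^+$ is unramified (with $K^+=\Q(\zeta_m)^+$), and in the complex case $K/K^+$ and $KL/K^+$ are two distinct CM-extensions unramified outside $\infty$, whose third quadratic subextension is totally real and unramified --- precisely the dichotomy you set up in your first paragraph, but which you cannot run without the field $L$. (Note also that non-abelianness over $\Q$ is automatic here, since the Galois group contains $H_8$, so $L\not\subseteq M$.) Without this theorem, or an equivalent construction, your argument establishes the proposition only for $m$ having two prime divisors $\equiv 1 \bmod 4$ that are mutual quadratic residues, and such triples as $5,13,37$ remain untreated.
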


\begin{proof}
It is sufficient to prove the claim for $m = p_1p_2p_3$, where
the $p_j \equiv 1 \bmod 4$ are pairwise distinct primes (this
follows from the fact that $h^+(m) \mid h^+(mn)$, which
is true by class field theory, since the maximal real subfield 
of $\Q(\zeta_{m})$ does not possess unramified quadratic 
extensions inside the maximal real subfield of $\Q(\zeta_{mn})$).
If two of them are quadratic residues of each other, then
the claim follows from Prop. \ref{Ptwo}. If 
$(p_1/p_2) = (p_2/p_3) = (p_3/p_1) = -1$, then there exists
an unramified quaternion extension $L$ of $\Q(\sqrt{m}\,)$,
which is normal over $\Q$ (see \cite{Lem96}). In particular, 
$L$ is either totally
real or totally complex. If it is totally real, then the
extension $LK^+/K^+$ is unramified (where $K = \Q(\zeta_m)$ 
and $K^+$ is its maximal real subfield). If $L$ is totally complex,
then $K/K^+$ and $KL/K^+$ are two different CM-extensions of $K^+$
which are unramified outside $\infty$; the quadratic subextension
of $KL/K^+$ different from $K$ and $L$ is a totally real quadratic
unramified extension of $K^+$. This proves the claim.
\end{proof}

Yet another application of this trick is

\begin{prop}
Let $p \equiv -q \equiv -q' \equiv 1 \bmod 4$ be primes such that
$(p/q) = (p/q') = 1$. Then $2 \mid h^+(pqq')$.
\end{prop}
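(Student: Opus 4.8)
The plan is to run the construction of Proposition \ref{Ptwo}, but over the real quadratic field $k = \Q(\sqrt{pqq'}\,)$. Since $-q \equiv -q' \equiv 1 \bmod 4$ we have $q \equiv q' \equiv 3 \bmod 4$, so $pqq' \equiv 1 \bmod 4$ and $\disc k = pqq'$; in particular $k$ is a real subfield of $K := \Q(\zeta_{pqq'})$, contained in its maximal real subfield $K^+$, and $h(K^+) = h^+(pqq')$. First I would record that the biquadratic field $\Q(\sqrt p,\sqrt{qq'}\,)$ --- the genus subfield attached to the factorization $\disc k = p\cdot qq'$ --- is totally real and lies in $K^+$, while $K/K^+$ is a CM extension unramified at all finite primes. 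The aim is then to produce, by the method of {\sc R\'edei} and {\sc Reichardt} \cite{Re,RR33}, an unramified cyclic quartic extension $F/k$ that is normal over $\Q$ with $\Gal(F/\Q) \simeq D_4$ and whose quadratic genus subfield is this real $\Q(\sqrt p,\sqrt{qq'}\,)$.

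The heart of the matter is the existence of $F$, and this is the step I expect to be the main obstacle. For the factorization $\disc k = d_1 d_2$ with $d_1 = p$ and $d_2 = qq'$, the R\'edei--Reichardt criterion requires the quadratic residue conditions $(p/q) = (p/q') = 1$, which hold by hypothesis, together with $(qq'/p) = 1$, which follows from these by reciprocity because $p \equiv 1 \bmod 4$; these guarantee that the relevant genus class is a square. What must still be shown is that it is a fourth power, i.e. that the R\'edei symbol $[p, qq']$ is trivial. I would dispatch this using the multiplicativity $[p, qq'] = [p,-q]\,[p,-q']$ in the second argument, reducing to the two imaginary quadratic fields $\Q(\sqrt{-pq}\,)$ and $\Q(\sqrt{-pq'}\,)$: each has exactly two ramified primes, so its $2$-class group is cyclic, and the classical criterion gives $4 \mid h(\Q(\sqrt{-pq}\,))$ precisely when $(p/q) = 1$, and likewise for $q'$. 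Hence $[p,-q] = [p,-q'] = 1$ under our hypotheses, so $[p,qq'] = 1$ and $F$ exists with the required totally real genus subfield.

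It then remains to transcribe the trick of Proposition \ref{Ptwo}. Since $F/\Q$ is dihedral it is not abelian, so $F \not\subseteq K$, while $F \cap K$ equals its maximal abelian subfield $\Q(\sqrt p,\sqrt{qq'}\,) \subseteq K^+$; thus $[FK^+ : K^+] = 2$ and $FK^+/K^+$ is unramified at every finite prime. Being normal over $\Q$, $F$ is either totally real or totally complex (the criterion of {\sc Scholz} \cite{Sch} decides which, but I would not need to evaluate it). If $F$ is totally real, then $FK^+/K^+$ is a quadratic extension unramified at the finite primes and at infinity, hence everywhere unramified, so $2 \mid h(K^+)$. If $F$ is totally complex, then $K/K^+$ and $FK^+/K^+$ are two distinct CM extensions of $K^+$ unramified at the finite primes, and the third quadratic subextension $N$ of $FK/K^+$ is fixed by complex conjugation, hence totally real and unramified at the finite primes, so again everywhere unramified. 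In either case $2 \mid h(K^+) = h^+(pqq')$, as claimed. The only genuinely new input beyond Proposition \ref{Ptwo} is the verification that $[p,qq'] = 1$; everything else is the by-now-familiar dichotomy.
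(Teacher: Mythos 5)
Your overall frame --- run the dichotomy trick of Proposition \ref{Ptwo} on a R\'edei--Reichardt extension $F$ of $k = \Q(\sqrt{pqq'}\,)$ attached to the factorization $pqq' = p \cdot qq'$ --- is sound, and your first and third paragraphs are correct as written. The genuine error sits exactly where you placed ``the heart of the matter,'' and it is an error in two directions at once. First, the extra step is not needed: the R\'edei--Reichardt theorem says that for a factorization of a fundamental discriminant into coprime fundamental discriminants $d_1 = p$, $d_2 = qq'$, the quadratic residue conditions $(d_1/r) = 1$ for all $r \mid d_2$ and $(d_2/r) = 1$ for all $r \mid d_1$ --- precisely the conditions you verified --- are already sufficient for the existence of a cyclic quartic $F/k$, unramified outside $\infty$, normal over $\Q$ with group $D_4$ and containing $\Q(\sqrt{d_1},\sqrt{d_2}\,)$. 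This is exactly how the paper uses the theorem in Proposition \ref{Ptwo}: the hypothesis $(p/q) = 1$ alone produces $F$ unramified outside $\infty$; quartic-residue (R\'edei-symbol type) conditions enter there only for the finer questions of whether $F$ is totally real (Scholz) and whether a cyclic octic extension exists (part (iii)). Since your argument, like the paper's, handles both the totally real and the totally complex case, no such condition is relevant to you.

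Second, your proposed verification of $[p,qq'] = 1$ is invalid, and its conclusion is actually false in general under the hypotheses. The equivalence ``$4 \mid h(\Q(\sqrt{-pq}\,))$ if and only if $(p/q) = 1$'' is the R\'edei--Reichardt $4$-rank criterion; it is the condition for the symbol $[p,-q]$ to be \emph{defined}, not for it to equal $+1$. The value of $[p,-q]$ is a strictly finer invariant --- for these fields, whose $2$-class group is cyclic, it governs whether $8 \mid h$ --- and it is not determined by $(p/q) = 1$. Concretely, take $p = 5$, $q = 11$, $q' = 19$, which satisfy all hypotheses of the proposition: then $h(\Q(\sqrt{-55}\,)) = 4$ while $h(\Q(\sqrt{-95}\,)) = 8$, so $[5,-11] \neq [5,-19]$, and the multiplicativity you invoke would give $[5,209] = -1$. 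Thus the claim of your middle paragraph is not a theorem; only its irrelevance saves the proof. Delete that paragraph and quote R\'edei--Reichardt correctly, and your argument is complete --- and is then a mild repackaging of the paper's own proof, which never invokes the theorem over the real field at all: the paper takes the R\'edei elements $\alpha_q, \alpha_{q'} \in \Q(\sqrt{p}\,)$ coming from the $4$-class fields of the two imaginary fields $\Q(\sqrt{-pq}\,)$ and $\Q(\sqrt{-pq'}\,)$ (where only $(p/q) = (p/q') = 1$ is needed and there is no archimedean subtlety), observes that each has mixed signature so that $\alpha_q\alpha_{q'}$ is totally positive or totally negative, and runs the same dichotomy with $\sqrt{\alpha_q\alpha_{q'}}$ in place of your $F$.
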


\begin{proof}
Consider the quadratic number field $\Q(\sqrt{-pq}\,)$; 
since $(p/q) = 1$, it has class number divisible by $4$, 
and the results of {\sc R\'edei} and {\sc Reichardt} show 
that the $4$-class field of $k$ is generated by the square 
root of $\alpha_q = x+y\sqrt{p}$, where $x,y \in \Z$ satisfy 
$x^2 - py^2 = -qz^2$; the same is true with $q$ replaced by 
$q'$. Since both $\alpha_q$ and $\alpha_{q'}$ have mixed 
signature, their product is either totally positive
or totally negative. The rest of the proof is clear.
\end{proof}

\begin{rem}
Any of the primes $p \equiv 1 \bmod 4$ in the propositions
above may be replaced by $p = 8$. Note that $(q/8)_4$ is defined
by $(q/8)_4 = (-1)^{(q-1)/8}$ for all primes $q \equiv 1 \bmod 8$.
\end{rem}

\section{Morishima's Results}

In this section we will generalize a result about the 
$2$-class group of certain cyclotomic fields first proved by 
{\sc Morishima} in \cite{Mori}. There he also proved a result 
about capitulation in cyclic unramified extensions of relative 
degree $p$, which we will give in the next section, along with 
related results which will be useful in Section \ref{S9}.

\begin{thm}\label{TM1}
Let $k^+$ be a totally real number field, and let $\fp$ be
a principal prime ideal $k^+$. Assume that the
class number of $k^+$ is divisible by some integer $n$,
and let $K^+/k^+$ be a cyclic unramified extension of relative
degree $n$. Let $k$ be a totally complex quadratic extension
of $k^+$ in which $\fp$ is ramified, and put $K = kK^+$. 
Then $\Cl(K)$ contains a subgroup of type $(\Z/2\Z)^{n-1}$.
\end{thm}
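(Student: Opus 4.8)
The plan is to reduce the statement to a genus-theoretic rank computation for the CM extension $K/K^+$ and then to invoke the ambiguous class number formula (Theorem~10.4 of \cite{Wa}), which is precisely the tool already used above. First I would pin down how $\fp$ decomposes. Since $\fp = (\pi)$ is principal in $k^+$, its Frobenius in the unramified abelian extension $K^+/k^+$ is trivial, so $\fp$ splits completely: $\fp\OO_{K^+} = \fP_1\cdots\fP_n$ with $n$ distinct primes satisfying $\prod_i \fP_i = (\pi)$. Because $\fp$ ramifies in the quadratic extension $k/k^+$ while $K = kK^+$ with $K^+/k^+$ unramified, each $\fP_i$ ramifies in $K/K^+$, say $\fP_i\OO_K = \fq_i^2$. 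Thus $K/K^+$ has at least $n$ finite ramified primes; moreover, $K^+$ being totally real and $K$ totally complex, all $r = [K^+:\Q]$ archimedean places of $K^+$ ramify as well. Hence the total number $t$ of ramified places of $K/K^+$ satisfies $t \ge n + r$.

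Next I would examine the extension map $j\colon \Cl(K^+)\to \Cl(K)$ on the ambiguous classes. Let $\tau$ generate $\Gal(K/K^+)$. Each $\fq_i$ is the unique prime of $K$ above $\fP_i$, hence $\tau$-invariant, so its class lies in $\Cl(K)^{\langle\tau\rangle}$ and satisfies $2[\fq_i] = [\fP_i\OO_K] = j([\fP_i])$. The basic structural fact of genus theory for the cyclic degree-$2$ extension $K/K^+$ is that the strongly ambiguous classes modulo $j(\Cl(K^+))$ are generated by the classes of the ramified primes; since these generators are $2$-torsion modulo $j(\Cl(K^+))$, they form an elementary abelian $2$-group, which is a subquotient of $\Cl(K)$.

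The final step is the rank count. By the ambiguous class number formula together with the attendant genus-theory exact sequence, the $\mathbb F_2$-rank of this elementary abelian subquotient equals $t - 1 - u$, where $u = \dim_{\mathbb F_2}$ of the image of $E_{K^+}$ in $\bigoplus_{v\ \mathrm{ram}}\mathbb F_2$. Since $E_{K^+}/E_{K^+}^2 \cong (\Z/2\Z)^{r}$ (unit rank $r-1$ together with $\{\pm1\}$), we have $u \le r$, and therefore the rank is at least $t-1-r \ge n-1$. A finite abelian group possessing an elementary abelian $2$-subquotient of rank $\ge n-1$ has $\trank \ge n-1$, so $\Cl(K)$ contains a subgroup of type $(\Z/2\Z)^{n-1}$, as claimed.

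The hard part will be the bookkeeping in the last paragraph: one must check that the $r$ archimedean ramified places exactly absorb the possible loss of up to $r$ coming from the unit image, so that the $n$ split primes $\fP_i$ above $\fp$ contribute the full $n-1$ to the rank. The decisive structural input is that the relevant subquotient is elementary abelian (rather than merely of order $\ge 2^{n-1}$), since this is what upgrades an order estimate into a statement about the \emph{type} of the subgroup; verifying this cleanly, confirming that no ramified place is miscounted, and checking that the capitulation kernel of $j$ only enlarges the subquotient, together form the crux of the argument.
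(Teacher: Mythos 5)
Your decomposition steps are exactly the paper's: $\fp$ principal forces complete splitting in the unramified abelian extension $K^+/k^+$; the $n$ primes above $\fp$ and the $r = (K^+:\Q)$ infinite places all ramify in $K/K^+$; and the unit contribution is bounded by $\trank E_{K^+}/E_{K^+}^2 = r$. The paper then concludes by citing Jehne's inequality $\rank \Cl_2(K/K^+) \ge \#\Ram(K/K^+) - \trank E_{K^+}/H - 1$ with $H = E_{K^+}\cap NK^\times$, which is precisely the genus-theoretic bound you set out to rederive; so the strategy coincides, and the only question is whether your rederivation is sound. There it has one genuine flaw: the count $t-1-u$ is Chevalley's ambiguous class number formula, i.e.\ it computes $\#\Cl(K)^{\la\sigma\ra}/h(K^+)$, where $\sigma$ generates $\Gal(K/K^+)$. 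It is \emph{not} the rank of the group you apply it to, namely $\la [\fq_1],\dots,[\fq_{t_f}]\ra\, j(\Cl(K^+))/j(\Cl(K^+))$ (here $t_f \ge n$ is the number of finite ramified primes). For that group the relations among the $[\fq_i]$ modulo $j(\Cl(K^+))$ are given by the image of $P_K^{\la\sigma\ra}/P_{K^+} \simeq \tH^1(\la\sigma\ra, E_K)$ in $\bigoplus_i \mathbb{F}_2$, and this dimension has no reason to equal $1+u-r$: the two counts differ in general by factors measuring capitulation and the gap between ambiguous and strongly ambiguous classes, and either can exceed the other.

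The flaw is reparable in two ways, both inside your framework. (a) Replace your subgroup by the quotient $\Cl(K)/\bigl(\Cl(K)^{1-\sigma}\, j(\Cl(K^+))\bigr)$: it is elementary abelian, since $c^2 = c^{1+\sigma}c^{1-\sigma}$ and $c^{1+\sigma} = j(N_{K/K^+}c)$; moreover $\#\bigl(\Cl(K)/\Cl(K)^{1-\sigma}\bigr) = \#\Cl(K)^{\la\sigma\ra} = h(K^+)\,2^{t-1-u}$ (kernel and cokernel of $1-\sigma$ have equal order), while the image of $j(\Cl(K^+))$ in this quotient has order at most $h(K^+)$, so the quotient has rank at least $t-1-u \ge n-1$. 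Here capitulation really does only enlarge things, and a quotient of $2$-rank $\ge n-1$ still forces $\trank \Cl(K) \ge n-1$. (b) Keep your subgroup but count its relations correctly using the CM structure: a unit $\eta \in E_K$ with $N_{K/K^+}\eta = 1$ has all archimedean absolute values equal to $1$, hence is a root of unity; thus ${}_NE_K = W_K$ and $\tH^1(\la\sigma\ra,E_K) = {}_NE_K/E_K^{\sigma-1}$ is a quotient of $W_K/W_K^2$, of order at most $2$. So there is at most one relation among the $[\fq_i]$ modulo $j(\Cl(K^+))$, and your subgroup has rank at least $t_f - 1 \ge n-1$. Note that (b) never uses the infinite places at all: your heuristic that the $r$ archimedean places ``absorb'' the possible loss of $r$ from the units is, via the Herbrand quotient of $E_K$, exactly the shadow of this simpler fact.
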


\begin{proof}
We use a lower bound for the rank of the relative class group
$$\Cl_p(K/k) = \ker (N: \Cl_p(K) \lra \Cl_p(k))$$ due to 
{\sc Jehne} \cite{Je}, who showed that, for cyclic 
extensions $K/k$ of prime degree $p$, we have 
\begin{equation} \label{EJ}
  \rank\, \Cl_p(K/k) \ge \# \Ram (K/k) - \prank E_k/H - 1. 
\end{equation}
Here $\Ram(K/k)$ denotes the set of (finite and infinite) primes
of $k$ ramified in $K$, and $H = E_k \cap NK^\times$ is the
subgroup of units which are norms of elements (or equivalently,
by {\sc Hasse}'s norm theorem, which are local norms). 

Applying this to the quadratic extension $K/k^+$, we see that
$\Ram (K/K^+)$ contains $n$ primes above $\fp$, as well as
the $(K^+:\Q)$ infinite primes; moreover, $H$ contains $E^2$
(where $E = E_{K^+}$), hence $(E:H) \mid (E:E^2) = (K^+:\Q)$,
and Jehne's estimate gives $\rank \Cl_2(K/K^+) \ge n-1$.
\end{proof}

\begin{cor}\label{TM}
Let $k$ be a complex subfield of $\Q(\zeta_p)$, let 
$K^+$ be an abelian unramified extension of $k^+$ of 
degree $n$, and put $K = kK^+$. Then $\Cl_2(K)$ contains 
a subgroup of type $(\Z/2\Z)^{n-1}$.
\end{cor}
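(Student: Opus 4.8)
The plan is to derive this as a specialization of Theorem~\ref{TM1}, so the bulk of the work is to exhibit a suitable principal prime and to check that $K^+/k^+$ does what the theorem needs. Let $k^+$ be the maximal totally real subfield of the complex field $k$, so that $k^+ \subseteq \Q(\zeta_p + \zeta_p^{-1})$ and $k/k^+$ is the CM-quadratic extension. Since $p$ is totally ramified in $\Q(\zeta_p)/\Q$, it is totally ramified in $k$ and in $k^+$; write $\fp$ for the unique prime of $k^+$ above $p$. The key preliminary fact is that $\fp$ is principal: total ramification forces the residue degree of $(1-\zeta_p)$ over $\fp$ to be $1$, so the ideal norm $N_{\Q(\zeta_p)/k^+}(1-\zeta_p)$ generates $\fp$. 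Furthermore the ramification index of $\fp$ in $k/k^+$ is $[k:\Q]/[k^+:\Q] = 2$, so $\fp$ ramifies in the totally complex quadratic extension $k/k^+$.

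Next I would verify the remaining hypotheses of Theorem~\ref{TM1}. The base field $k^+$ is totally real, $\fp$ is a principal prime of $k^+$, and $k/k^+$ is a totally complex quadratic extension in which $\fp$ ramifies. Because $K^+/k^+$ is an unramified abelian extension of degree $n$, class field theory identifies $\Gal(K^+/k^+)$ with a quotient of $\Cl(k^+)$, so $n \mid h(k^+)$ and the divisibility hypothesis is automatic.

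The one genuine gap --- and the step I expect to be the main obstacle --- is that Theorem~\ref{TM1} is stated for a \emph{cyclic} extension $K^+/k^+$, while here $K^+/k^+$ is only abelian; and when the $n$-part of $\Cl(k^+)$ is not cyclic there may be no cyclic unramified extension of degree $n$ to fall back on. I would resolve this by reexamining the proof of Theorem~\ref{TM1}: cyclicity is never actually used there. The only role of the hypotheses on $K^+/k^+$ is to ensure that $\fp$ splits completely in $K^+$, so that $\Ram(K/K^+)$ contains $n$ distinct finite primes lying over $\fp$. But a principal prime splits completely in the Hilbert class field of $k^+$, hence in every unramified abelian extension $K^+/k^+$; so there are indeed $n$ primes of $K^+$ above $\fp$, and each ramifies in $K/K^+$ since $\fp$ ramifies in $k/k^+$.

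Granting this, the argument of Theorem~\ref{TM1} carries over unchanged. Applying Jehne's inequality~\eqref{EJ} to the quadratic extension $K/K^+$, whose ramified primes consist of the $n$ primes above $\fp$ together with the $[K^+:\Q]$ infinite places, and bounding the unit index by $\prank E_{K^+}/H \le [K^+:\Q]$, one obtains $\rank \Cl_2(K/K^+) \ge (n + [K^+:\Q]) - [K^+:\Q] - 1 = n-1$, so $\Cl_2(K)$ contains a subgroup of type $(\Z/2\Z)^{n-1}$. The only point still to be nailed down is that all $[K^+:\Q]$ infinite places of $K^+$ ramify in the CM-field $K$, which requires $K^+$ to be totally real; this holds provided $K^+/k^+$ is unramified also at the infinite places, as the convention for ``unramified'' here should be understood.
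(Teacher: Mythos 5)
Your proposal is correct and follows the same route as the paper, whose entire proof reads: observe that the prime of $k^+$ above $p$ is principal (it is the relative norm of $1-\zeta_p$) and apply Theorem~\ref{TM1}. Your additional observation --- that the cyclicity hypothesis in Theorem~\ref{TM1} is never used, only the complete splitting of the principal prime $\fp$ in the unramified abelian extension $K^+/k^+$ --- correctly fills in the one detail the paper glosses over in passing from ``cyclic'' in the theorem to ``abelian'' in the corollary.
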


\begin{proof}
Observe that the prime ideal above $p$ in $k^+$ is principal (it
is the relative norm of $1-\zeta_p$), and apply Theorem \ref{TM1}.
\end{proof}

Although this result might help to explain why class groups 
of real subfields of cyclotomic fields with small conductor
are small, one should not regard it as a support for
{\sc Vandiver}'s conjecture that $p \nmid h^+(p)$. Of course, 
Corollary \ref{TM} predicts that $\Cl(K)$ has a subgroup of
type $(\Z/2\Z)^{p-1}$ in this case (with some $p > 10^6$,
since {\sc Vandiver}'s conjecture holds for smaller $p$), but 
there is no reason to suspect that this should be impossible for 
fields with large degree and discriminant. In fact, {\sc Cornell}
and {\sc Washington} \cite{CW} showed that there are cyclotomic 
fields $\Q(\zeta_p)$ with $h^+(p) > p$, and more recently 
{\sc Jeannin} \cite{Jean} found many quintic cyclic fields 
with large class number.

\medskip

\noindent{\bf Example.}
Let $k$ be the quartic subfield of $\Q(\zeta_{229})$; then 
$k^+ = \Q(\sqrt{229}\,)$ has class number $3$ and Hilbert 
class field $K^+ = k^+(\alpha)$, where $\alpha^3-4\alpha-1 = 0$.
Computations with {\sc Pari} \cite{pari} give 
$\Cl(K) \simeq \Z/17\Z \times (\Z/2\Z)^4$. The subgroup of order 
$17$ comes from $\Cl(k) \simeq  \Z/3\Z \times  \Z/17\Z$, while
Corollary \ref{TM} predicts that $\Cl(K)$ contains a subgroup
of type $(\Z/2\Z)^2$. 

\medskip

The next two corollaries give examples of cyclic quartic fields with
infinite class field tower:

\begin{cor}\label{C2}
Let $p \equiv 5 \bmod 8$ be a prime; if the class number $h$ of 
$k = \Q(\sqrt{p}\,)$ is  $\ge 15$, then the class field tower of 
the quartic cyclic field $K$ of conductor $p$ is infinite
(actually this holds for any complex cyclic quartic
field $K$ containing $k$).
\end{cor}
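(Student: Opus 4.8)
The plan is to use the Golod–Shafarevich criterion for infinite class field towers, which states that a number field $F$ has an infinite $2$-class field tower whenever $\trank \Cl(F) \ge 2 + 2\sqrt{r+1}$, where $r = r_1 + r_2$ is the number of archimedean places of $F$ (or more precisely, the rank of the unit group plus one). For the complex cyclic quartic field $K$ of conductor $p$, we have $r_1 = 0$ and $r_2 = 2$, so $r = 2$; the criterion then requires a $2$-rank of at least $2 + 2\sqrt{3} \approx 5.46$, i.e. $\trank \Cl(K) \ge 6$. So the task reduces to producing a sufficiently large elementary abelian $2$-subgroup of $\Cl(K)$.

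First I would apply Corollary \ref{TM} to manufacture this $2$-rank. Take $k = \Q(\sqrt{p}\,)$ with its maximal unramified abelian extension (the Hilbert $2$-class field of $k^+$); since $p \equiv 5 \bmod 8$, the field $k = \Q(\sqrt{p}\,)$ is the real quadratic field $k^+$, and its complex cyclic quartic companion $K$ of conductor $p$ sits above it as the CM-field considered in Corollary \ref{TM}. With $K^+/k^+$ taken to be an unramified abelian extension of degree $n$ dividing $h = h(k^+)$, Corollary \ref{TM} gives $\Cl_2(K) \supseteq (\Z/2\Z)^{n-1}$, so $\trank \Cl(K) \ge n - 1$.

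The key step is choosing $n$ large enough. Since $h \ge 15$, I can take $n$ to be a divisor of $h$ with $n \ge 7$, provided such a divisor exists and the corresponding unramified abelian $K^+/k^+$ is available; concretely, one wants $n-1 \ge 6$, i.e. $n \ge 7$. Here I would use that the class number hypothesis $h \ge 15$ is arranged precisely so that $\Cl(k^+)$ admits a quotient (hence, by class field theory, an unramified abelian extension) of degree at least $7$: if $\Cl(k^+)$ is cyclic of order $h \ge 15$ then $n = h \ge 15 \ge 7$ works directly, and in general the largest cyclic quotient of an abelian group of order $\ge 15$ still has order $\ge 7$ unless the group is elementary of small exponent, a case one must check separately. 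Feeding $n \ge 7$ into Corollary \ref{TM} yields $\trank \Cl(K) \ge 6 \ge 2 + 2\sqrt{3}$, and Golod–Shafarevich then forces the $2$-class field tower of $K$, a fortiori the full class field tower, to be infinite.

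The main obstacle will be the arithmetic bookkeeping in the last step: verifying that the threshold $h \ge 15$ is genuinely sufficient to guarantee an unramified abelian extension of degree $n$ with $n - 1 \ge 2 + 2\sqrt{r+1}$, and that the bound $2 + 2\sqrt{3} < 6$ leaves no borderline case uncovered. One must be careful that Corollary \ref{TM} produces an elementary abelian $2$-group of rank exactly $n-1$ and that this rank, not merely the order, is what the Golod–Shafarevich inequality consumes; the subtlety is that the structure of $\Cl(k^+)$ controls which $n$ are attainable, so the cleanest argument chooses $n$ to be the order of a large cyclic quotient rather than $h$ itself. The parenthetical claim that the conclusion holds for \emph{any} complex cyclic quartic $K \supseteq k$ follows because the argument only uses $k^+ = k \cap \R = \Q(\sqrt{p}\,)$ and its class number, not the specific conductor.
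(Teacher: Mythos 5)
Your argument has a genuine gap at its central step: you apply Corollary \ref{TM} as if it bounded the $2$-rank of the class group of the quartic field $K$ itself, but that is not what it says. In the notation of Theorem \ref{TM1}, the role of the CM field ``$k$'' is played by your quartic field, the role of ``$K^+$'' by an unramified abelian extension of $k^+ = \Q(\sqrt{p}\,)$ of degree $n$, and the conclusion that $\Cl_2$ contains a subgroup of type $(\Z/2\Z)^{n-1}$ holds for the \emph{compositum} $K\cdot K^+$, a field of degree $4n$ over $\Q$ --- not for $K$. Nothing in the paper (or elsewhere) gives $\trank \Cl(K) \ge n-1$ for the quartic field itself; the classes constructed by Jehne's inequality live upstairs and do not descend. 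Consequently your application of Golod--Shafarevich to $K$ with the small threshold $2 + 2\sqrt{3} \approx 5.46$ rests on a rank bound you do not have, and the whole reduction to ``rank $\ge 6$'' collapses.

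Once the rank is placed where it actually lives --- in the compositum $KF$, where $F$ is the full Hilbert class field of $k = \Q(\sqrt{p}\,)$ (take $n = h$; Corollary \ref{TM} is stated for \emph{abelian} unramified extensions, so your detour through cyclic quotients is unnecessary --- and your fallback claim that an abelian group of order $\ge 15$ has a cyclic quotient of order $\ge 7$ outside of small exceptional cases cannot be completed, since nothing excludes $\Cl(k) \simeq (\Z/3\Z)^3$) --- the Golod--Shafarevich threshold must be computed for $KF$, which is totally complex of degree $4h$ and hence has $\rank E/E^2 = 2h$. The criterion then reads $h - 1 \ge 2 + 2\sqrt{2h+1}$, which holds precisely for $h \ge 14$; since $h$ is odd (prime discriminant), this is exactly the source of the hypothesis $h \ge 15$. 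The threshold is \emph{not} independent of $h$: enlarging the field to capture the $2$-rank also enlarges the unit rank that the criterion consumes, which is the real tension in the proof. This is how the paper argues: $KF$ has infinite $2$-class field tower, and since $KF/K$ is unramified, the class field tower of $K$ is itself infinite. Your proposal is missing both the correct field in which to apply the criterion and this final descent step.
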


\begin{proof}
Let $F$ be the Hilbert class field of $k$; by Corollary 
\ref{TM}, the compositum $KF$ has a class group of $2$-rank 
$r \ge h - 1$; by the criterion of {\sc Golod-Shafarevic}, 
$KF$ has  infinite $2$-class field tower if 
$r \ge 2 + 2\sqrt{1 + \rank E/E^2} = 2+2 \sqrt{2h+1}$.
If $h \ge 14$, this inequality is satisfied, and our 
claim follows (note that $h$ is odd).
\end{proof}

\noindent{\bf Example.} If $p = 13693$, then $h = 15$.

\begin{cor}
Let $p \equiv q \equiv 1 \bmod 4$ be primes such that 
$pq \equiv 5 \bmod 8$; assume that the fundamental unit $\eps$
of $k = \Q(\sqrt{pq}\,)$ has positive norm, and that $h(k) \ge 6$.
Then the two cyclic complex quartic subfields of $\Q(\zeta_p)$ 
containing $k$ have infinite class field tower.
\end{cor}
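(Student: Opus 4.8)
The plan is to reproduce the proof of the previous corollary, but to feed \emph{both} ramified primes into Jehne's inequality \eqref{EJ}; the extra factor of $2$ in the resulting rank is what makes the weaker bound $h\ge 6$ (rather than $h\ge 15$) suffice. Write $k=\Q(\sqrt{pq}\,)$ and let $K,K'$ be the two cyclic quartic subfields of $\Q(\zeta_{pq})$ with real quadratic subfield $k$. Since $pq\equiv 5\bmod 8$ forces exactly one of $p,q$ to be $\equiv 5\bmod 8$, the quartic characters cutting out $K$ and $K'$ are odd, so both fields are totally complex. Let $F$ be the Hilbert class field of $k$, a totally real field of degree $2h$ with $h=h(k)$. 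Because $F/k$ is unramified while $K/k$ ramifies at the primes above $p$ and $q$, we have $F\cap K=k$, and $KF/K$ is an everywhere unramified abelian extension; hence $KF$ lies in the class field tower of $K$, and it suffices to prove that $KF$ (and likewise $K'F$) has infinite $2$-class field tower.

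The heart of the matter is the claim that the two ramified primes $\fp,\fq$ of $k$ above $p$ and $q$ are \emph{principal}; this is the step I expect to be the main obstacle, and it is precisely what replaces the observation ``the prime above $p$ is principal'' used for $\Q(\zeta_p)$ in Theorem \ref{TM1}. One has $\fp^2=(p)$, $\fq^2=(q)$ and $\fp\fq=(\sqrt{pq}\,)$, so $[\fp]$ and $[\fq]$ coincide and have order dividing $2$ in the wide class group. Genus theory shows that the narrow class group of $k$ has $2$-rank $1$; in it, the relation $[\fp]^+[\fq]^+=[(\sqrt{pq}\,)]^+$ together with the mixed signature of $\sqrt{pq}$ forces exactly one of $\fp,\fq$ to be narrowly principal. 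Here the hypothesis $N\eps=+1$ is essential: it guarantees that $\sqrt{pq}$ has no totally positive associate, so that $[(\sqrt{pq}\,)]^+\neq 1$ (were $N\eps=-1$, this would fail and both primes would be non-principal). The narrowly principal one is a fortiori principal, and the other is then principal because $\fp\fq=(\sqrt{pq}\,)$.

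With both $\fp$ and $\fq$ principal, each splits completely in $F$, producing $h$ primes of $F$ above $p$ and $h$ above $q$, all of which ramify in the CM extension $KF/F$; together with the $2h$ ramified infinite primes this gives $\#\Ram(KF/F)\ge 4h$. Since $F$ is totally real of degree $2h$, one has $\trank E_F/E_F^2=2h$, hence $\trank E_F/H\le 2h$, and Jehne's estimate \eqref{EJ} yields
\begin{equation*}
  \trank \Cl(KF)\ \ge\ \trank \Cl_2(KF/F)\ \ge\ 4h-2h-1\ =\ 2h-1.
\end{equation*}
Finally $KF$ is totally complex of degree $4h$, so the relevant quantity in the Golod--Shafarevic criterion is $r_1+r_2=2h$, and an infinite $2$-class field tower is guaranteed once $2h-1\ge 2+2\sqrt{2h+1}$. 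This inequality holds for every $h\ge 6$, which proves the claim for $KF$, and therefore for $K$; the identical argument applies to $K'$.
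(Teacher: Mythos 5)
Your proof is correct and is essentially the paper's own argument: the hypothesis $N\eps=+1$ is used exactly to conclude that the primes of $k$ above $p$ and $q$ are principal, hence split completely in the Hilbert class field $F$, so that Jehne's bound \eqref{EJ} applied to the CM-extension $KF/F$ gives $2$-rank at least $2h-1$, and Golod--Shafarevic then yields an infinite $2$-class field tower for $h\ge 6$ (in fact already for $h\ge 5$). The only difference is one of detail: you supply the genus-theoretic verification that $\fp$ and $\fq$ are principal and that the two quartic fields are totally complex, both of which the paper asserts without proof.
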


\begin{proof}
Since $\eps$ has positive norm, the prime ideals above
$p$ and $q$ are principal. Thus both ideals split in $F/k$
(we use the same terminology as above), and $\Ram(KF/F)$
contains $2n$ prime ideals. This gives $\rank \Cl_2(KF) \ge 2n-1$,
and the bound of {\sc Golod-Shafarevic} shows that $KF$ has 
infinite class field tower if $h(k) \ge 5$; since $h(k)$ 
is even, we actually have $h(k) \ge 6$. 
\end{proof}

\noindent{\bf Example.} 
a) If $p = 5$ and $q = 353$, then $h = 6$. \\ 
b) According to {\sc Schoof} \cite{Sh3}, the plus class number
of $\Q(\zeta_p)$ for $p = 3547$ equals $16777$; this implies that 
$\Q(\zeta_p)$ has infinite class field tower. \\
c) {\sc Cornacchia} \cite{Co} has shown that the cyclic quintic 
extension of conductor $3931$ has $2$-class number $2^8$; this 
implies that the subfield of degree $10$ in $\Q(\zeta_{3931})$ 
has infinite $2$-class field tower.

\medskip

Techniques similar to those used in the proof of Theorem 
\ref{TM1} were used by {\sc Martinet} \cite{Mar},
{\sc Schmithals} \cite{Schm} and {\sc Schoof} \cite{Sh} to construct
quadratic number fields with infinite class field towers;
note, however, that a related construction by {\sc Matsumura}
\cite{Mat} is incorrect: the error occurs in his proof 
of Lemma 4. In fact, here is a counter example to his Theorem 1: 
take $p = 17$, $q = -23$, $\ell = 3$; his Theorem 1 predicts that
the compositum $K$ of $\Q(\sqrt{-23},\sqrt{17})$ and the cubic
field of discriminant $-23$ has an ideal class group with
subgroup $(2,2)$. However, $\Cl(K) \simeq \Z/7\Z$ by direct
computation. 

{\sc Ozaki} \cite{Oza} found an original construction
of real abelian fields with large $\ell$-class groups;
using $\ell$-adic $L$-functions and Iwasawa theory, he
proved the following result:

\begin{prop}
There exist abelian extensions $M/\Q$
whose conductor is a product of three different primes,
such that $\rank \Cl_\ell(M)$ exceeds any given integer.
\end{prop}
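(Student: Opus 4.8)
Fix an odd prime $\ell$ (for $\ell=2$ one can argue directly with the genus-theoretic methods used above). The plan is to reduce the problem to the \emph{minus} part, where the analytic input is cleaner because no $\ell$-adic regulator intervenes, and then to feed in $\ell$-adic $L$-functions. Let $M$ be an imaginary abelian field and $M^+$ its maximal real subfield. By {\sc Leopoldt}'s Spiegelungssatz, for every odd character $\phi$ of $\Gal(M/\Q)$ one has
\begin{equation}\label{refl}
  \rank \Cl_\ell(M^+)^{(\omega\phi^{-1})} \ \ge\ \rank \big(\Cl_\ell(M)^-\big)^{(\phi)} - 1 ,
\end{equation}
the reflection defect for a single eigenspace being at most $1$. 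Since $\omega\phi^{-1}$ is even, the left-hand eigenspace lives in the real field; it therefore suffices to exhibit imaginary abelian fields whose conductor is a product of three primes and one of whose minus eigenspaces has arbitrarily large $\ell$-rank.

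I would take the third prime to be $\ell$ and work in the cyclotomic direction. Starting from an odd character $\phi$ of conductor $pq$ ($p,q$ odd primes), consider the layers $M_m = \Q(\zeta_{pq\ell^{m}})$, of conductor $pq\ell^{m}$, and the branch $X^{(\phi)} = \varprojlim_m \big(\Cl_\ell(M_m)^-\big)^{(\phi)}$ of the minus Iwasawa module. By the theorem of {\sc Ferrero} and {\sc Washington} its $\mu$-invariant vanishes, so $X^{(\phi)}$ is a finitely generated $\Z_\ell$-module of rank $\lambda^-(\phi)$; computing $X^{(\phi)}$ modulo $(1+T)^{\ell^m}-1$ then shows that $\rank\big(\Cl_\ell(M_m)^-\big)^{(\phi)}$ stabilises, for $m$ large, at a value $\ge \lambda^-(\phi)$. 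Everything is thereby reduced to producing odd characters $\phi$ of conductor $pq$ with $\lambda^-(\phi)$ as large as we please.

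This is where the $\ell$-adic $L$-functions enter. By the main conjecture of {\sc Mazur} and {\sc Wiles}, $\lambda^-(\phi)$ equals the $\lambda$-invariant of the {\sc Kubota}--{\sc Leopoldt} power series $g_\psi(T)$ attached to the even character $\psi=\omega\phi^{-1}$, that is, the number of zeros of $g_\psi$ in the open unit disc. The coefficients of $g_\psi$ are $\ell$-adically interpolated by the generalised Bernoulli numbers $B_{n,\psi\omega^{-n}}$ through the {\sc Kummer} congruences, and the $\ell$-divisibility of these numbers can be expressed, via {\sc Stickelberger}'s relation and the explicit Gauss-sum formula for $g_\psi \bmod \ell$, in terms of the values $\phi(p),\phi(q)$ and of the residues of $p,q$ modulo powers of $\ell$. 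Forcing a long initial segment of the coefficients of $g_\psi$ to be divisible by $\ell$ pushes $\lambda^-(\phi)$ beyond any prescribed bound, and each of the resulting congruence conditions on the pair $(p,q)$ is satisfied by infinitely many prime pairs by Dirichlet's theorem.

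The hard part is this last step: to show that the growing system of congruences forcing the leading coefficients of $g_\psi$ to vanish modulo $\ell$ is simultaneously satisfiable with only the two free primes $p,q$, and that it genuinely drives $\lambda^-(\phi)$ to infinity rather than saturating at a bounded value. This is precisely the analytic content governed by the explicit formula for the $\ell$-adic $L$-function, and it is here that the Iwasawa-theoretic machinery is indispensable; once such $\phi$ are in hand, the passage back to the real subfield $M_m^+$, of conductor $pq\ell^m$, through \eqref{refl} is immediate.
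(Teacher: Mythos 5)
Your argument, as written, is not a proof: its central step is exactly the point you concede you cannot carry out. Everything before the last paragraph is a reduction --- reflection to pass from minus parts to real fields, Ferrero--Washington to identify the eventual rank of $\bigl(\Cl_\ell(M_m)^-\bigr)^{(\phi)}$ with $\lambda^-(\phi)$, Mazur--Wiles to identify $\lambda^-(\phi)$ with the $\lambda$-invariant of the Kubota--Leopoldt series --- and at the end of it the proposition has been traded for the assertion that there exist odd characters $\phi$ of conductor $pq$, with only the \emph{two} free primes $p,q$, such that $\lambda^-(\phi)$ is arbitrarily large. For that assertion you offer only the hope that forcing initial coefficients of $g_\psi$ to vanish modulo $\ell$ yields congruence conditions on $(p,q)$ that are simultaneously satisfiable; no reason is given why a growing system of such conditions can be met with just two parameters, nor why satisfying them drives $\lambda^-$ to infinity instead of saturating --- and you say so yourself. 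Note that the statement you have reduced to is, if anything, deeper than the proposition: it concerns Iwasawa invariants with only two primes in the conductor, the third prime having been spent on the $\Z_\ell$-tower. (Such results are in fact known, but their proofs count primes that split completely in high layers of the tower --- an algebraic argument --- rather than forcing coefficients of the $\ell$-adic $L$-function.) There are also two smaller defects: the fields $M_m = \Q(\zeta_{pq\ell^m})$ have conductor $pq\ell^m$, which for $m \ge 2$ is not a product of three different primes, and you need $m$ large to see $\lambda^-$; and the reflection step is superfluous for the statement as given, since $\Cl_\ell(M)^-$ is already a subgroup of $\Cl_\ell(M)$, so a large minus rank alone suffices --- reflection only matters if you insist on real $M$, and then its defect at the ramified prime $\ell$ must also be controlled.

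The paper's own proof needs none of this machinery; it is genus theory, in the form of Jehne's lower bound (\ref{EJ}) for the rank of a relative class group, the same inequality used in the proof of Theorem \ref{TM1}. One takes primes $p \equiv q \equiv 1 \bmod \ell$ and a third prime $r$ such that the real subfield $K \subseteq \Q(\zeta_r)$ of odd degree $n$ has $p$ and $q$ splitting completely in it, and then a suitable intermediate field $M$ of $Kk_pk_q/K$ of conductor $pqr$: the extension $M/K$ is cyclic of degree $\ell$, all $2n$ primes of $K$ above $p$ and $q$ ramify in it, and since $K$ is real the unit defect in (\ref{EJ}) is at most $\rank E_K/E_K^\ell = n-1$, giving $\rank \Cl_\ell(M/K) \ge 2n - (n-1) - 1 = n$ (which the paper then adjusts to $n-1$ via the index of the norm). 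Dirichlet's theorem supplies, for every $n$, primes $r \equiv 1 \bmod n$ and $p \equiv q \equiv 1 \bmod \ell r$, and the conductor $pqr$ is genuinely a product of three distinct primes. So the analytic input you hoped would carry the argument is not what does the work; if you want to complete a proof along your lines, the missing step must be filled by an arithmetic splitting argument of exactly this flavour.
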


\begin{proof}
Fix an odd prime $\ell$; for a prime $q \equiv 1 \bmod \ell$,
let $k_q$ denote the subfield of $\Q(\zeta_q)$ of degree $\ell$.
Choose odd primes $p$, $q$ and $r$ such that 
$p \equiv q \equiv 1 \bmod \ell$, and let $n$ be the
largest odd divisor of $r-1$ such that $p^{(r-1)/n} \equiv 
q^{(r-1)/n} \equiv 1 \bmod r$. Let $K$ be the subfield of 
$\Q(\zeta_r)$ with degree $n$. Then $L = Kk_p k_q$ is a
normal extension of $K$ with $\Gal(L/K) \simeq (\ell,\ell)$, and the
primes $p$ and $q$ split completely in $K/\Q$. Let $M$ be any
of the $\ell-1$ intermediate fields of $L/K$ different from $Kk_p$
and $Kk_q$; since these fields have conductor $pqr$, all the
primes above $p$ and $q$ in $K$ (there are exactly $2n$ such primes)
must ramify in $M/K$; since $K$ is real, it does not contain
$\zeta_\ell$, hence $\rank\!_\ell \,E/H \le \rank E/E^\ell = n-1$, 
and (\ref{EJ}) shows that
$\rank \Cl_\ell(M/K) \ge 2n - (n-1) - 1 = n$.
Since $(\Cl_\ell(M):N_{L/M} \Cl_\ell(L)) = \ell$ by class field theory,
we must have $\rank \Cl_\ell(M/K) \ge n-1$. 

Since, for given $n \in \N$, there are infinitely many
primes $r \equiv 1 \bmod n$ and $p \equiv q \equiv 1 \bmod \ell r$,
our claim follows.

Incidentally, the same argument works if we replace $k_p$ by 
the field of degree $\ell$ and conductor $\ell^2$. 
\end{proof}

\section{Capitulation of Ideal Classes}

We want to study the following situation: let $L/F$ be an
abelian extension with Galois group $G = \Gal(L/F)
\simeq \Delta \times \Gamma$, where $\Delta$ and $\Gamma$
are cyclic groups of coprime order. Let $k$ and $K$
denote the fixed fields of $\Gamma$ and $\Delta$, 
respectively; then we can identify $\Delta = \Gal(k/F) \simeq
\Gal(L/K)$ and $\Gamma = \Gal(L/k) \simeq \Gal(K/F)$ (see Figure
\ref{F1} for the Hasse diagrams).

\begin{figure}[h]
\caption{}\label{F1}
\begin{diagram}[width=1.5em,height=1.5em]
  &     &  & L  &   & & &  &     &  & 1  &   & \\
  &  & \ldLine(3,3)  & & \rdLine(2,2) & & &   &  
		& \ldLine(3,3) & & \rdLine(2,2)  &  \\
  & &   & &  & K &   &   & &   & &  & \Delta \\
 k  &   & & &  \ruLine(3,3)  & & &  \Gamma   &   & & &  \ruLine(3,3)  &  \\
   &  \luLine(2,2) &   & &  & &   &   &  \luLine(2,2) &   & &   &\\
   	&    & F &   & 	& & & &    & G &   & 	&\\
\end{diagram}
\end{figure}

Let $M$ be a $G$-module of order coprime to $\# \Delta$
(e.g. $M = \Cl_p(L)$, $\Cl_p(L/k)$, or $\kappa = \kappa_{L/k}$
for primes $p \nmid \# \Delta$);  we can decompose $M$
using the idempotents $e_\phi$ of the group ring $\Z[\Delta]$
as $M = \bigoplus M(\phi)$ with $M(\phi) = e_\phi(M)$.
Now we use (cf. {\sc Schoof} \cite{Sh2})

\begin{prop}\label{Sf}
In this notation we have 
$\tH^q(\Gamma,M)(\phi) \simeq \tH^q(\Gamma,M(\phi))$. 
\end{prop}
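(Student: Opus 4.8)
The plan is to exploit the fact that, because $G=\Delta\times\Gamma$ is a direct product, the two factors commute, so that the $\Delta$-structure used to form the components $M(\phi)$ is orthogonal to the $\Gamma$-structure over which the cohomology is taken. Concretely, each $\delta\in\Delta$ acts on $M$ as an automorphism of $\Gamma$-modules, since $\delta$ commutes with every $\gamma\in\Gamma$; hence so does every element of the group ring $\Z[\Delta]$, and in particular each idempotent $e_\phi$ is an endomorphism of $M$ \emph{as a $\Gamma$-module}. Thus the decomposition $M=\bigoplus_\phi M(\phi)$ with $M(\phi)=e_\phi M$ is a splitting of $\Gamma$-modules, the complementary projector $1-e_\phi$ exhibiting $M(\phi)$ as a direct summand.

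Next I would apply the functor $\tH^q(\Gamma,-)$, which is additive, so that the splitting of $M$ as a $\Gamma$-module yields
\[
  \tH^q(\Gamma,M)\;\simeq\;\bigoplus_\phi \tH^q(\Gamma,M(\phi)).
\]
It then remains to identify the $\phi$-th summand on the right with the component $\tH^q(\Gamma,M)(\phi)$ of the left-hand side. Here one uses that the $\Delta=G/\Gamma$ action on $\tH^q(\Gamma,M)$ is exactly the one functorially induced by the action of $\Delta$ on $M$: because $G$ is abelian, the inner action of a lift of $\delta$ on the normal subgroup $\Gamma$ is trivial, so the standard $G/\Gamma$-action on the cohomology of $\Gamma$ is induced purely by $\delta\colon M\to M$. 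Consequently the endomorphism of $\tH^q(\Gamma,M)$ induced by the $\Gamma$-map $e_\phi$ coincides with the action of $e_\phi\in\Z[\Delta]$, and is the projector onto $\tH^q(\Gamma,M)(\phi)$. Since $e_\phi$ acts as the identity on $M(\phi)$ and as $0$ on $M(\psi)$ for $\psi\neq\phi$, functoriality shows the induced projector is the identity on $\tH^q(\Gamma,M(\phi))$ and zero on the remaining summands; this pins down $\tH^q(\Gamma,M(\phi))$ as precisely the image $e_\phi\,\tH^q(\Gamma,M)=\tH^q(\Gamma,M)(\phi)$, giving the claimed isomorphism.

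Finally I would confirm that the idempotents are legitimate operators on the cohomology. The $e_\phi$ involve a factor $1/\#\Delta$, which is meaningful on $M$ because $|M|$ is coprime to $\#\Delta$, so multiplication by $\#\Delta$ is invertible there. The same must hold for $\tH^q(\Gamma,M)$: since $\Gamma$ is cyclic, every Tate cohomology group is isomorphic either to $M^\Gamma/N_\Gamma M$ or to $\ker N_\Gamma/(\sigma-1)M$ (with $\sigma$ a generator of $\Gamma$ and $N_\Gamma=\sum_{\gamma\in\Gamma}\gamma$), hence is a subquotient of $M$; its order therefore divides $|M|$ and is again coprime to $\#\Delta$, so the $e_\phi$ act on it and decompose it. The only point that genuinely requires care — and hence the main obstacle — is the compatibility asserted in the second paragraph, namely that forming the $\phi$-component commutes with $\tH^q(\Gamma,-)$. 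This reduces to the naturality of the $G/\Gamma$-action on $\tH^q(\Gamma,-)$ together with the vanishing of the conjugation action of $\Delta$ on $\Gamma$, both of which hold precisely because $G=\Delta\times\Gamma$ is abelian; everything else is the formal additivity and functoriality of Tate cohomology.
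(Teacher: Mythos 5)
Your proof is correct and takes essentially the same route as the paper's: both arguments rest on the idempotents $e_\phi$ being $\Gamma$-module maps (so that $M=\bigoplus_\phi M(\phi)$ splits as $\Gamma$-modules), on the additivity and functoriality of $\tH^q(\Gamma,-)$, and on the fact that the $\Delta$-action on $\tH^q(\Gamma,M)$ is the one induced on coefficients. The only difference is in the finishing step --- you identify $e_\phi\,\tH^q(\Gamma,M)$ with the summand $\tH^q(\Gamma,M(\phi))$ directly via the induced projectors, whereas the paper produces an injection $\tH^q(\Gamma,M)(\chi)\hra\tH^q(\Gamma,M(\chi))$ for each $\chi$ and concludes by summing over all $\chi$; your write-up additionally makes explicit the functoriality and coprimality points that the paper leaves implicit.
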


\begin{proof}
Let $\phi \ne \chi$ be different characters of $\Delta$, 
and consider the submodule $\tH^q(\Gamma,M(\phi))$; then 
$\tH^q(\Gamma,M(\phi))(\chi) = 0$, since $e_\chi$ kills 
the image of every $x \in \tH^q(\Gamma,M(\phi))$. Thus the 
injection $\imath: \tH^q(\Gamma,M)(\chi) \hra \tH^q(\Gamma,M)$
actually lands in  $\tH^q(\Gamma,M(\chi))$, and we have an 
injection $\imath: \tH^q(\Gamma,M)(\chi) \hra \tH^q(\Gamma,M(\chi))$. 
Summing over all inequivalent $\chi$ we get $ \tH^q(\Gamma,M)$ on 
both sides, hence $\imath$ must be an isomorphism.
\end{proof}

Since $L/k$ is cyclic, we have an injection 
$\kappa \hra \tH^{-1}(\Gamma,E_L)$ (see {\sc Iwasawa} \cite{Iwa}); 
here $\tH^q$ denotes {\sc Tate}'s cohomology groups. Taking 
the $\phi$-parts of this injection we find 
$\kappa(\phi) \hra \tH^{-1}(\Gamma,E_L)(\phi)$. 
Now Prop. \ref{Sf} shows that 
$\kappa(\phi) \hra \tH^{-1}(\Gamma,E_L(\phi))$.

As a special case, let $L$ and $k$ be CM-fields with
maximal real subfields $K$ and $F$, respectively (in
particular, $\Delta = \{1, J\}$, where $J$ denotes
complex conjugation). Then the minus part of $\tH^{-1}(\Gamma,E_L)$
is $\tH^{-1}(\Gamma,E_L^-) = \tH^{-1}(\Gamma,W_L) \simeq 
{}_NW_L/W_L^{1-\sigma}$, where ${}_NW_L = \{\zeta \in W_L:
N_{L/k} \zeta = 1\}$. We have shown (compare {\sc Jaulent}
\cite{Jau} and {\sc Kida} \cite{K}):

\begin{prop}
Let $L/k$ be a cyclic extension of CM-fields of odd prime degree $p$.
Then $\kappa_{L/k}^- = \kappa_{L/k} \cap \Cl^-(k)$ is isomorphic 
to a subgroup of ${}_NW_L/W_L^{1-\sigma}$. In particular,
$\# \, \kappa_{L/k}^- \mid p$. 
\end{prop}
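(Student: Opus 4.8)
The plan is to read the Proposition off the chain of injections assembled in the discussion immediately preceding its statement, and then to bound the order of the target by an elementary cohomological computation. First I would record that, since $p$ is odd, the idempotents $e_{\pm} = \frac12(1 \pm J)$ of $\Z[\Delta]$ act on the $p$-group $\Cl_p(k)$, and that $\kappa_{L/k}^- = \kappa_{L/k} \cap \Cl^-(k)$ is precisely the $\phi$-part $\kappa(\phi)$ for the nontrivial (odd) character $\phi$ of $\Delta = \{1, J\}$. As $\kappa = \kappa_{L/k}$ is a $G$-submodule of $\Cl_p(k)$, this $\pm$-decomposition is compatible with the maps at hand, so nothing is lost in passing to minus parts.

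Next I would invoke the injection $\kappa \hra \tH^{-1}(\Gamma, E_L)$ of {\sc Iwasawa}, take $\phi$-parts, and combine Proposition \ref{Sf} with the identification $E_L(\phi) = E_L^-$ and $\tH^{-1}(\Gamma, E_L^-) = \tH^{-1}(\Gamma, W_L) \simeq {}_NW_L/W_L^{1-\sigma}$ recorded above. This yields $\kappa_{L/k}^- = \kappa(\phi) \hra {}_NW_L/W_L^{1-\sigma}$, which is exactly the first assertion, that $\kappa_{L/k}^-$ is isomorphic to a subgroup of ${}_NW_L/W_L^{1-\sigma}$. The only point needing care here is that $E_L^- = W_L$ holds only up to a factor whose index divides the Hasse unit index $Q \in \{1,2\}$; but the cohomology groups in play are $p$-torsion with $p$ odd, so inverting $2$ is harmless and the identification is exact for our purposes.

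Finally, for the divisibility $\#\,\kappa_{L/k}^- \mid p$, I would observe that ${}_NW_L/W_L^{1-\sigma} = \tH^{-1}(\Gamma, W_L)$ is the Tate cohomology of the cyclic group $\Gamma$ of prime order $p$ with coefficients in the finite cyclic group $W_L$. Such a group is annihilated by $\#\Gamma = p$, and since ${}_NW_L \subseteq W_L$ is cyclic, its quotient by $W_L^{1-\sigma}$ is cyclic as well; hence $\tH^{-1}(\Gamma, W_L)$ is cyclic and killed by $p$, so its order is $1$ or $p$. The subgroup $\kappa_{L/k}^-$ therefore has order dividing $p$. I expect the main, though modest, obstacle to be the bookkeeping around the minus-part decomposition: confirming that passing to $\phi$-parts commutes with the Iwasawa injection, and that the index-at-most-$2$ discrepancy between $E_L^-$ and $W_L$ genuinely disappears after localizing away from $2$.
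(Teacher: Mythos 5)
Your proposal is correct and takes essentially the same route as the paper: the paper's proof \emph{is} the chain of injections preceding the statement, namely Iwasawa's injection $\kappa \hra \tH^{-1}(\Gamma,E_L)$, passage to $\phi$-parts via Prop.~\ref{Sf}, and the identification of the minus part of $E_L$ with $W_L$, with the bound $\#\,\kappa_{L/k}^- \mid p$ following (as you argue) because $\tH^{-1}(\Gamma,W_L) \simeq {}_NW_L/W_L^{1-\sigma}$ is a cyclic group killed by $p = \#\Gamma$. Your remarks on the Hasse unit index and on cyclicity of the target merely make explicit what the paper leaves implicit (and uses again in the proof of Theorem~\ref{Cap}).
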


Let $k$ be a number field containing a $p$-th root of 
unity $\zeta_p$. A cyclic extension $L/k$ of degree $p$ 
is called {\em essentially ramified} if 
$L = k(\srt{p}{\alpha})$ and $\alpha\OO_k$ is not the 
$p^{th}$ power of an ideal. In particular, subextensions 
of $k(\srt{p}{E_k})/k$ are not essentially ramified.
In \cite{K}, {\sc Kida} showed (generalizing results of 
{\sc Moriya} \cite{Mor} and {\sc Greenberg} \cite{Green}) 
that $\kappa_{L/k}^- = 1$ if $\zeta_p \not\in k$, if 
$L = k(\zeta_{p^n})$ for some $n \ge 1$, or if $L/k$ is 
ramified outside $p$. We will now show that
this result is almost best possible:

\begin{thm}\label{Cap}
Let $p$ be an odd prime and $L/k$ a cyclic $p$-extension of 
CM-fields. Then $\kappa_{L/k}^- = 1$ if and only if
one of the following conditions holds:
\begin{enumerate}
\item[i)] $\zeta_p \not\in k$;
\item[ii)] $\zeta_p \in k$ and $L = k(\zeta_{p^n})$ for some $n \ge 2$;
\item[iii)]  $\zeta_p \in k$ and $L/k$ is essentially ramified.
\end{enumerate}
Moreover, if $\# \, \kappa_{L/k}^- = p$, then 
$L = k(\srt{p}{\beta})$, $\beta \OO_k = \fb^p$, 
and $ \kappa_{L/k}^- = \la [\fb] \ra$. 
\end{thm}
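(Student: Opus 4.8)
\emph{Plan.} Since the statement is really about the Kummer structure of a degree-$p$ extension (this is where ``essentially ramified'' is defined and where the displayed form $L=k(\srt{p}{\beta})$ lives), I would treat the case $[L:k]=p$ throughout and reduce the general cyclic $p$-extension to it; the tool is the injection established above,
$\kappa_{L/k}^-\hookrightarrow{}_N W_L/W_L^{1-\sigma}\simeq\tH^{-1}(\Gamma,W_L)$, where $\Gamma=\Gal(L/k)=\la\sigma\ra$. The first step is to compute the right-hand group. If $\zeta_p\notin k$ then $\zeta_p\notin L$ (as $k(\zeta_p)/k$ would have degree dividing both $p-1$ and $[L:k]$, hence $1$), so the $p$-part of $W_L$ is trivial and $\tH^{-1}(\Gamma,W_L)=0$; this is case (i). If $\zeta_p\in k$, let $p^a$ be the exact order of the $p$-power roots of unity in $k$. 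Either $L=k(\zeta_{p^{a+1}})$ — case (ii) — where a direct calculation with the cyclotomic action gives ${}_N W_L=W_L^{1-\sigma}=\la\zeta_p\ra$ and hence $\tH^{-1}(\Gamma,W_L)=0$; or $L$ contains no new $p$-power root of unity, in which case $\sigma$ acts trivially on $(W_L)_p=\la\zeta_{p^a}\ra$, so $W_L^{1-\sigma}=1$, ${}_N W_L=\la\zeta_p\ra$, and $\tH^{-1}(\Gamma,W_L)\simeq\Z/p\Z$. This settles the ``if'' direction for (i) and (ii) and, via the injection, re-proves $\#\kappa_{L/k}^-\mid p$.

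The heart of the argument is a \emph{reverse construction} in the remaining case $\zeta_p\in k$, $L\neq k(\zeta_{p^n})$. Suppose $[\fb]\in\kappa_{L/k}^-$ is nontrivial; by the injection it maps to a generator $\zeta_p^{\,s}$ ($s\not\equiv0$) of $\tH^{-1}(\Gamma,W_L)$. Choosing $\gamma\in L^\times$ with $\fb\OO_L=(\gamma)$, the Iwasawa map sends $[\fb]\mapsto[\gamma^{1-\sigma}]$, so $\gamma^{1-\sigma}\equiv\zeta_p^{\,s}$ modulo $W_L^{1-\sigma}=1$; hence $\sigma(\gamma^p)=\gamma^p$ and $\beta:=\gamma^p\in k^\times$. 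Thus $L=k(\srt{p}{\beta})$, and since $(\gamma)^p=\fb^p\OO_L$ equals $(\gamma^p)=\beta\OO_L$, injectivity of ideal extension forces $\beta\OO_k=\fb^p$. In particular $\beta\OO_k$ \emph{is} a $p$-th power, so $L/k$ is not essentially ramified, and $\kappa_{L/k}^-=\la[\fb]\ra$. This simultaneously yields the ``moreover'' statement and, contrapositively, that essential ramification (iii) forces $\kappa_{L/k}^-=1$.

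For the converse (``only if'') in this case I would run the construction forward. Given $L=k(\srt{p}{\beta})$ with $\beta\OO_k=\fb^p$, the element $\theta=\srt{p}{\beta}$ satisfies $(\theta)^p=\beta\OO_L=(\fb\OO_L)^p$, and since the ideal group is torsion-free this gives $\fb\OO_L=(\theta)$; hence $[\fb]$ capitulates. Writing $\sigma\theta=\zeta_p^{\,j}\theta$ with $j\not\equiv0$ (as $\theta\notin k$), the image is $\theta^{1-\sigma}=\zeta_p^{-j}\neq0$ in $\tH^{-1}(\Gamma,W_L)$, so the injection is onto and $\kappa_{L/k}^-\simeq\Z/p\Z$. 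Combined with the previous paragraph this establishes the full equivalence and pins down the capitulating class as $[\fb]$.

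The step I expect to be the real obstacle is the \emph{minus-part bookkeeping} at the boundary of condition (iii). One must check that the generator $\zeta_p$ of $\tH^{-1}(\Gamma,W_L)$ survives the passage to the minus part $\tH^{-1}(\Gamma,E_L)^-$ and is genuinely represented by a minus ideal class, i.e. that $\fb$ lies in $\Cl^-(k)$ and is nontrivial. The delicate locus is precisely the family flagged in the text — the subextensions of $k(\srt{p}{E_k})/k$, which are not essentially ramified — where $\fb$ becomes principal and $\kappa_{L/k}^-=1$ even though $\beta\OO_k$ is a $p$-th power. Disentangling these unit/root-of-unity extensions from the genuine non-principal case (so that $J\fb\sim\fb^{-1}$ and $[\fb]\neq1$ exactly outside the cyclotomic and unit loci) is where the argument needs the most care, and is, I believe, the content concealed in the phrase ``almost best possible''.
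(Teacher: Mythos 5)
Your handling of cases i) and ii), your ``reverse construction'' (which proves that essential ramification forces $\kappa_{L/k}^-=1$ and delivers the ``moreover'' clause), and the resulting bound $\#\,\kappa_{L/k}^-\mid p$ all run parallel to the paper's proof. The problem is the remaining implication --- that $\zeta_p\in k$, $L\neq k(\zeta_{p^n})$ and $\beta\OO_k=\fb^p$ together force $\kappa_{L/k}^-\neq 1$ --- and here your forward construction has a genuine gap, which you flag in your last paragraph but do not close, and which you in fact misdiagnose. From $\fb\OO_L=(\theta)$ and $\theta^{1-\sigma}=\zeta_p^{-j}$ you conclude that ``the injection is onto and $\kappa_{L/k}^-\simeq\Z/p\Z$.'' But the Iwasawa map sends the \emph{ideal class} $[\fb]$ to the class of $\theta^{1-\sigma}$ in ${}_NE_L/E_L^{1-\sigma}$, and the trivial class maps to zero: if $\fb$ happens to be principal, say $\fb=(\alpha)$, then $\theta=\alpha u$ with $u\in E_L$, the identity $\theta^{1-\sigma}=\zeta_p^{-j}$ merely says $\zeta_p^{-j}\in E_L^{1-\sigma}$, and nothing nontrivial capitulates. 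So you still owe two facts: that $[\fb]\in\Cl^-(k)$, and that $[\fb]\neq 1$, i.e.\ that the principal case cannot occur outside case ii). Worse, your closing paragraph asserts that on the ``unit locus'' (CM subextensions of $k(\srt{p}{E_k})/k$) one really has $\fb$ principal and $\kappa_{L/k}^-=1$ although $L/k$ is not essentially ramified; if that were true, the theorem itself would be false, since such an $L$ would satisfy none of i)--iii).

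The missing idea --- and it is the actual heart of the paper's proof --- is the CM constraint on the Kummer radical. Since $L$ is CM, complex conjugation $J$ is central in $\Gal(L/k^+)$ and inverts $\mu_p$, so Kummer theory forces $\beta^{1+J}=\xi^p$ for some $\xi\in k^\times$. This gives, first, $\fb^{1+J}=(\xi)$, hence $[\fb]\in\Cl^-(k)$; and second, it collapses the unit locus into case ii): if $\fb=(\alpha)$ is principal, then $\beta=\alpha^p\eps$ with $\eps\in E_k$, so $L=k(\srt{p}{\eps})=k(\srt{p}{\eps^2})=k(\srt{p}{\zeta\eta})$, where $\eps^2=\zeta\eta$ with $\zeta\in W_k$, $\eta\in E_{k^+}$ (the standard unit structure of CM fields). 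Now $\beta^{1+J}\in(k^\times)^p$ yields $\eta^2=(\zeta\eta)^{1+J}\in(k^\times)^p$, whence $L=k(\srt{p}{\zeta^2\eta^2})=k(\srt{p}{\zeta})$, and a root-of-unity radical means exactly $L=k(\zeta_{p^n})$ with $n\ge 2$. In other words, the CM subextensions of $k(\srt{p}{E_k})/k$ are precisely the cyclotomic ones; they are not exceptions lurking behind the phrase ``almost best possible'' (which refers to Kida's earlier sufficient conditions), but are case ii) itself. Equivalently, in your cohomological language: for odd $p$ the minus part of $E_L\otimes\Z_p$ is $W_L\otimes\Z_p$, so $\zeta_p\in E_L^{1-\sigma}$ already forces new $p$-power roots of unity in $L$; proving \emph{that} requires the same unit decomposition $\eps^2=\zeta\eta$, so the step cannot be avoided. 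Without it, your construction proves nothing in the one direction where the theorem has real content.
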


\begin{proof}
Since $G$ is killed by $p$, so is $\text{H}^{-1}(G,E_L) \simeq
{}_NW_L/{}_NW_L \cap E_L^{1-\sigma}$; thus ${}_NW_L = 1$ or
${}_NW_L = \la \zeta_p \ra$. We start by showing $\kappa_{L/k}^- = 1$
if one of the conditions i) -- iii) is satisfied:
\begin{enumerate}
\item[i)] In this case, clearly ${}_NW_L = 1$;
\item[ii)] We have $\zeta_p = \zeta_{p^n}^{1-\sigma}$ for a
	suitable choice of $\sigma$, hence 
	${}_NW_L \subseteq  E_L^{1-\sigma}$.
\item[iii)] Assume that $\kappa_{L/k}^- \ne 1$; then it has order
	$p$, and there exists an ideal class $c = [\fa] \in \Cl_p^-(k)$
	such that $\fa\OO_L = \alpha$ and $\alpha^{1-\sigma} = \zeta_p$. 
	This implies $(\alpha^p)^{1-\sigma} = 1$, i.e.
	$\beta = \alpha^p \in k$, and thus $K = k(\srt{p}{\beta})$.
        But now $\beta\OO_k = \fa^p$, and $K/k$ is not essentially 
	ramified.
\end{enumerate}
Now assume that  $\kappa_{L/k}^- = 1$; if  i) holds,
then we are done, hence we may assume that $\zeta_p \in k$.
Since $K/k$ is cyclic, there exists a $\beta \in k$ such that
$K = k(\srt{p}{\beta})$. If iii) holds, i.e. if $K/k$ is 
essentially ramified, we are done; assume therefore that
$\beta \OO_k = \fb^p$ for some integral ideal $\fb$. Since
$L/F$ is normal, we must have $\beta^{1+J} = \xi^p$ for some
$\xi \in k^\times$ by Galois theory; thus
$(\beta)^{1+J} = (\fb^p)^{1+J} = (\xi)^p$, and we get
$\fb^{1+J} = (\xi)$, in other words: $[\fb] \in \Cl^-(k)$.
But $\fb \OO_L = k(\srt{p}{\beta})$ shows that $\fb$
capitulates, and now our assumption $\kappa_{L/k}^- = 1$ implies
that $\fb \OO_k = (\alpha)$ is principal. Thus
$\beta = \alpha^p \eps$ for some unit $\eps \in E_k$.
Since $\eps^2 = \zeta \eta$ for some root of unity $\zeta \in W_k$
and a real unit $\eta \in E_{k^+}$, we find
$L = k(\srt{p}{\beta}) = k(\srt{p}{\alpha^p \eps})
   =  k(\srt{p}{\eps}) =  k(\srt{p}{\eps^2})
   = k(\srt{p}{\zeta \eta})$. But now
$\beta^{1+J} = \xi^p$ implies that $\eta^2 = (\zeta \eta)^{1+J}$
is also a $p^{th}$ power in $k^\times$, and we finally find
$L = k(\srt{p}{\zeta})$, i.e. we are in case ii).
The last remark follows from the second half of our proof.
\end{proof}

\section{Blowing up class groups}\label{S9}

In this section we will study the behaviour of ideal class
groups under transfer in cyclic extensions. The starting point
of our considerations was the following observation: let
$k$ be a subfield of $K = \Q(\zeta_n)$, and assume that a prime
$p \nmid (K:k)$ divides the class number $h(k)$ of $k$; then
the transfer of ideal classes $j: \Cl(k) \lra \Cl(K)$ is
injective, and $p \mid h(K)$. 

Take for example $n = 23$ and $k = \Q(\sqrt{-23}\,)$: here
$3 \mid h(k)$, $(K:k) = 11$, and hence $3 \mid h(K)$.
This simple method does, however, not explain why $3 \mid h(K)$
for $K = \Q(\zeta_{31})$: although $k = \Q(\sqrt{-31}\,)$ has
class number $3$, the degree $(K:k) = 15$ is divisible by $3$.
From Theorem \ref{Cap} we know that $j: \Cl(k) \lra \Cl(K)$ is
injective in this case also (since $\Cl(k) = \Cl^-(k)$), hence 
$3 \mid h(K)$. The class number formula, on the other hand, 
shows that even $3^2 \mid h(K)$. This is explained by
the following proposition, which generalizes
a result in \cite{Mor} (Satz 2 and \S 4):

\begin{prop}\label{P2.1}
Let $K/k$ be a ramified cyclic extension of prime degree $p$,
put $r = \rank \Cl_p(k)$, and let $\gamma$ denote the rank
of $\kappa_{K/k}$. Then
$ \# \, \Cl_p(K) \ \ge \  p^{r-\gamma}\, \# \, \Cl_p(k).$	
\end{prop}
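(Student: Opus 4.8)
The plan is to derive this from the exact sequence relating the class groups of $k$ and $K$ under a ramified cyclic $p$-extension, using the capitulation kernel $\kappa_{K/k}$ and the transfer map $j: \Cl_p(k) \to \Cl_p(K)$. The key point is that we want to count $\# \Cl_p(K)$ from below, and the obvious strategy is to locate two sources of $p$-contribution inside $\Cl_p(K)$: the image $j(\Cl_p(k))$, whose size is $\#\Cl_p(k)/\#\kappa_{K/k}$, and a complementary piece coming from the norm-one part $\Cl_p(K/k) = \ker(N)$.

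First I would set up the norm sequence $1 \to \Cl_p(K/k) \to \Cl_p(K) \xrightarrow{N} \Cl_p(k)$ together with the transfer $j$. Since $K/k$ is cyclic, the composite $N \circ j$ is multiplication by $p$ on $\Cl_p(k)$, which is the zero map on the part killed by $p$; more usefully, the image $N(\Cl_p(K))$ has index dividing $p$ in $\Cl_p(k)$ (this is the standard fact that $(\Cl_p(k) : N\Cl_p(K))$ is a power of $p$, controlled by genus theory). The cleaner route is to read off $\#\, j(\Cl_p(k)) = \#\Cl_p(k)/\#\kappa_{K/k}$ directly from the definition of the capitulation kernel $\kappa_{K/k} = \ker j$. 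This already gives one factor; the ramification of $K/k$ is what guarantees a genuinely new second factor.

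The heart of the argument is to show $\# \Cl_p(K) \ge p^{r-\gamma}\,\#\,j(\Cl_p(k))$, equivalently that $\Cl_p(K)/j(\Cl_p(k))$ has order at least $p^{\,r-\gamma}$. Here I would use the ambiguous class number formula (or Jehne's estimate \eqref{EJ}, which is available from Theorem \ref{TM1}): because $K/k$ is \emph{ramified}, the ambiguous classes $\Cl_p(K)^G$ have $p$-rank bounded below in terms of the number of ramified primes, and the fixed part $\Cl_p(K)^G$ contains $j(\Cl_p(k))$. The rank $r$ of $\Cl_p(k)$ enters because each of the $r$ independent generators of $\Cl_p(k)[p]$ either survives under $j$ or lies in $\kappa_{K/k}$; the $\gamma$ capitulating generators are exactly discounted, leaving $r - \gamma$ independent classes in $j(\Cl_p(k))$, and the ramification forces at least this many independent ambiguous classes \emph{not} already accounted for by norms from below.

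\emph{The main obstacle} I anticipate is the bookkeeping that separates the contribution of $j(\Cl_p(k))$ from the ramification contribution without double-counting: one must check that the $r-\gamma$ non-capitulating classes and the classes produced by ramification (the inertia generators in $\Cl_p(K)^G$) together give an honest lower bound on $\#\,\Cl_p(K)$ rather than on a single subquotient. I would handle this by comparing $p$-ranks through the inflation map and the exact sequence
\[
  1 \lra \kappa_{K/k} \lra \Cl_p(k) \xrightarrow{\;j\;} \Cl_p(K),
\]
so that $\#\, j(\Cl_p(k)) = \#\Cl_p(k)/\#\kappa_{K/k}$, and then invoking the ambiguous class number formula to supply the extra factor $p^{\,r-\gamma}$ coming from ramified primes. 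Multiplying the two bounds yields $\#\,\Cl_p(K) \ge p^{\,r-\gamma}\,\#\,\Cl_p(k)$, as claimed.
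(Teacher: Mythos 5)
Your proposal assembles the right ingredients (the norm sequence, the fact that $N \circ j$ is the $p$-th power map, the count $\# j(\Cl_p(k)) = \#\Cl_p(k)/\#\kappa_{K/k}$), but the final assembly has a genuine gap --- in fact two. First, the arithmetic does not close: your two bounds are $\# j(\Cl_p(k)) = \#\Cl_p(k)/p^\gamma$ (note $\kappa_{K/k}$ is killed by $p$, so $\#\kappa_{K/k} = p^\gamma$) and $\#\Cl_p(K) \ge p^{r-\gamma}\,\# j(\Cl_p(k))$; multiplying them gives only $\#\Cl_p(K) \ge p^{r-2\gamma}\,\#\Cl_p(k)$, which falls short of the claim by a factor $p^\gamma$ and agrees with it only when $\gamma = 0$. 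To make your decomposition work, the quotient $\Cl_p(K)/j(\Cl_p(k))$ would have to have order at least $p^r$, not $p^{r-\gamma}$. Second, the mechanism you propose for the ``heart'' --- the ambiguous class number formula, with ramification forcing extra independent ambiguous classes --- cannot deliver even the weaker bound: the proposition assumes only that $K/k$ is ramified, so there may be a single ramified prime, and then the ambiguous class number formula gives $\#\,\Am(K/k) = \#\Cl(k)/(E_k : E_k \cap NK^\times) \le \#\Cl(k)$, i.e.\ no extra factor at all; Jehne's bound (\ref{EJ}) likewise degenerates when $\#\Ram(K/k)$ is small.

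The paper's proof avoids both problems by placing the two factors differently. Since $K/k$ is ramified, it is linearly disjoint from the Hilbert class field of $k$, so by class field theory the norm $N: \Cl_p(K) \to \Cl_p(k)$ is \emph{surjective} (not merely of $p$-power index, as you state); hence $\#\Cl_p(K) = \#\NCl_p(K)\cdot\#\Cl_p(k)$ exactly, where $\NCl_p(K) = \ker N$. This makes the full $\#\Cl_p(k)$ one factor, with no division by $\#\kappa_{K/k}$. The factor $p^{r-\gamma}$ is then located \emph{inside} $\ker N$: since $N \circ j$ is the $p$-th power map, one has $\kappa_{K/k} \subseteq \pCl(k)$ and $\pCl(k)^j \subseteq \NCl_p(K)$, so $\#\NCl_p(K) \ge \#\,\pCl(k)^j = (\pCl(k):\kappa_{K/k}) = p^{r-\gamma}$. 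Note that it is the image of the $p$-torsion $\pCl(k)$, not of all of $\Cl_p(k)$, that lands in $\ker N$; this is exactly the double-counting issue you flagged, and it is resolved by never using $j(\Cl_p(k))$ as a separate factor at all. If you prefer to salvage your own decomposition, observe instead that $N$ induces a surjection of $\Cl_p(K)/j(\Cl_p(k))$ onto $\Cl_p(k)/\Cl_p(k)^p$, a group of order $p^r$; that repairs the exponent, and multiplying by $\# j(\Cl_p(k)) = \#\Cl_p(k)/p^\gamma$ then yields the claim --- no ambiguous class number formula is needed anywhere.
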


\begin{proof}
Consider the exact sequence
\begin{equation}\label{CD0}
\begin{CD} 1 @>>> \NCl_p(K) @>>> \Cl_p(K) @>N>> \Cl_p(k) @>>> 1. \end{CD}
\end{equation}
Here the norm map $N: \Cl_p(K) \lra \Cl_p(k)$ is onto by class field
theory since $K/k$ is ramified, and $\NCl_p(K)$ is the kernel of this
map by definition. Now clearly 
$\kappa \subseteq \pCl(k) := \{c \in \Cl(k): c^p = 1\}$ and 
$\pCl(k)^j \subseteq \NCl_p(K)$; this shows immediately that
$\# \, \NCl_p(K) \ge \# \, \pCl(k)^j \ge (\pCl(k):\kappa) = p^{r-\gamma}$. 
\end{proof}

\noindent{\bf Example.} Put $k = \Q(\sqrt{229}\,)$, and let $K$
be the sextic subfield of $\Q(\zeta_{229})$. Computations 
(\cite{Mak}) show that $h(k) = h(K) = 3$: now Prop. \ref{P2.1}
says that the class group of $k$ capitulates in $K$. 

\smallskip

For our next result, we will need some results of {\sc Inaba}
\cite{Ina} (see {\sc Gras} \cite{Gras} for a modern exposition) 
on Galois modules of cyclic groups. Let 
$G = \la \sigma \ra$ be a finite group of prime order $p$, 
and let $M$ be a finite $G$-module of order $p^t$ for some 
$t \in \N$.  Define the submodules 
$M_k = \{ m \in M: m^{(1-\sigma)^k} = 1\}$
and $M^{(k)} = \{ m \in M: m^{p^k} = 1\}$, and let 
$\nu = 1 + \sigma + \sigma^2 + \ldots + \sigma^{p-1}= j \circ N$ 
denote the 'algebraic norm' on $M$. Then
\begin{enumerate}
\item $1 = M_0 \subseteq M_1 \subseteq \ldots \subseteq M_n = M$
	for a sufficiently big $n \in \N$. Moreover, $M_j = M_{j+1}$
	if and only if $M_j = M$;
\item $(M_n:M_{n-1}) \le \ldots \le (M_2:M_1) \le \# \, M_1$;
\item If $M^\nu = 1$, then $M^{(n)} = M_{n(p-1)}$, and in particular
	$\#\, M_1 \le (M:M^p) \le (\# \, M_1)^{p-1}$; if 
	moreover $M^p \ne 1$, then  $(M:M^p)  \ge p^{p-2} \#\, M_1$.
\end{enumerate}
We will also need the existence of polynomials $f, g, h \in \Z[X]$
such that
\begin{eqnarray}
    p   & = & (1-\sigma)^{p-1} f(\sigma) + \nu g(\sigma) \label{E1}\\
   \nu  & = & (1-\sigma)^{p-1} + p h(\sigma) \label{E2} 
\end{eqnarray}

Let $K/k$ be a cyclic extension of prime degree $p$, and
let $\sigma$ be a generator of the Galois group $G = \Gal(K/k)$.
An ideal class $c$ of $\Cl(K)$ is called ambiguous if
it is fixed under the action of $G$, i.e. if $c^\sigma = c$.
The ambiguous ideal classes form a subgroup $\Am(K/k)$ of
$\Cl(K)$.

\begin{prop}\label{P1Am}
If $K/k$ is a cyclic ramified extension of prime degree, then 
$$\#\, \Am(K/k) = (\NCl(K):\Cl(K)^{1-\sigma}) \# \Cl(k);$$
in particular, $\#\, \Am(K/k)$ is divisible by $h_k$.
\end{prop}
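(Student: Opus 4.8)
The plan is to compute $\#\,\Am(K/k)$ by realizing the ambiguous classes as the kernel of the endomorphism $1-\sigma$ of $\Cl(K)$ and then telescoping against the norm sequence (\ref{CD0}). By definition $\Am(K/k) = \{c \in \Cl(K) : c^\sigma = c\}$ is precisely the kernel of the map $c \mapsto c^{1-\sigma}$ on $\Cl(K)$, whose image is $\Cl(K)^{1-\sigma}$. Thus I would begin from the short exact sequence
\[
 1 \lra \Am(K/k) \lra \Cl(K) \overset{1-\sigma}{\lra} \Cl(K)^{1-\sigma} \lra 1,
\]
which immediately gives $\#\,\Am(K/k) = \#\,\Cl(K)/\#\,\Cl(K)^{1-\sigma}$. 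The whole proof then reduces to rewriting this quotient in terms of $\#\,\Cl(k)$ and the index $(\NCl(K):\Cl(K)^{1-\sigma})$.

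First I would check that $\Cl(K)^{1-\sigma}$ sits inside $\NCl(K)$. This is immediate: since $\sigma$ acts trivially on $\Cl(k)$ and the norm is $\Gal$-equivariant, one has $N(c^{1-\sigma}) = N(c)\,N(c)^{-\sigma} = N(c)\,N(c)^{-1} = 1$ for every $c \in \Cl(K)$, so $c^{1-\sigma} \in \ker N = \NCl(K)$. Next I would invoke the norm sequence for the full class groups,
\[
 1 \lra \NCl(K) \lra \Cl(K) \overset{N}{\lra} \Cl(k) \lra 1,
\]
exactly as in (\ref{CD0}): surjectivity of $N$ is the point where ramification enters through class field theory, for since $K/k$ is ramified while $H_k/k$ is unramified and $K/k$ has prime degree, we must have $K \cap H_k = k$, whence the norm is onto. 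This yields $\#\,\Cl(K) = \#\,\NCl(K)\cdot\#\,\Cl(k)$.

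Combining the two ingredients, I would then write
\[
 \#\,\Am(K/k) = \frac{\#\,\Cl(K)}{\#\,\Cl(K)^{1-\sigma}} = \frac{\#\,\Cl(K)}{\#\,\NCl(K)}\cdot\frac{\#\,\NCl(K)}{\#\,\Cl(K)^{1-\sigma}} = \#\,\Cl(k)\cdot(\NCl(K):\Cl(K)^{1-\sigma}),
\]
which is the asserted formula. The \emph{in particular} clause is then the trivial observation that the factor $\#\,\Cl(k) = h_k$ divides the product.

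I do not expect a serious obstacle here: the only non-formal input is the surjectivity of the norm, which has already been used in establishing (\ref{CD0}), and everything else is bookkeeping with orders of finite groups. The one step demanding a line of genuine care is the containment $\Cl(K)^{1-\sigma}\subseteq\NCl(K)$, without which the middle quotient in the telescoping product above would not make sense as an honest index of subgroups; once that containment is in place, the factorization of orders forces the identity.
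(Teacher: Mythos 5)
Your proof is correct and takes essentially the same route as the paper: identify $\Am(K/k)$ as the kernel of $1-\sigma$ so that $\#\,\Am(K/k) = (\Cl(K):\Cl(K)^{1-\sigma})$, get surjectivity of the norm in (\ref{CD0}) from class field theory using ramification, and compare the chain $\Cl(K)^{1-\sigma} \subseteq \NCl(K) \subseteq \Cl(K)$. The only cosmetic difference is that the paper packages the index computation as a snake-lemma argument applied to the inclusions of $\Cl(K)^{1-\sigma}$ into the norm sequence, whereas you telescope indices directly --- it is the same computation.
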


\begin{proof}
Applying the snake lemma to the exact and commutative diagram
(note that the surjectivity of the norm map $N:\Cl(K) \lra \Cl(k)$ follows
from class field theory since $K/k$ is completely ramified)
$$\begin{CD}
1 @>>> \Cl(K)^{1-\sigma} @>>> \Cl(K)^{1-\sigma} @>>> 1 \\
  @.    @VVV   @VVV    @VVV   @. \\ 
1 @>>> \NCl(K) @>>> \Cl(K) @>N>> \Cl(k) @>>> 1 \\
\end{CD}$$
and using the fact that the alternating product of the orders
of finite groups in an exact sequence is $1$, we find 
$$ (\NCl(K):\Cl(K)^{1-\sigma}) = (\Cl(K):\Cl(K)^{1-\sigma}) \# \Cl(k). $$
Since $(\Cl(K):\Cl(K)^{1-\sigma}) = \# \Am(K/k)$, 
the claimed equality follows.
\end{proof}

In the special case $\gamma = 0$, Theorem \ref{P3} below was
given (without proof) by Tateyama \cite{T}:

\begin{thm}\label{P3}
If $(\Cl_p(K):\Cl_p(k)^j) = p^a$ for some 
$a \le p-2+\gamma$, then $\Cl_p(k)^j = \Cl_p(K)^p$. 
\end{thm}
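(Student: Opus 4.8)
The plan is to apply the Inaba–Gras structure theory to the $G$-module $M = \Cl_p(K)$, where $G = \Gal(K/k)$ has order $p$. The key insight should be that the hypothesis $a \le p-2+\gamma$ on the index $(\Cl_p(K):\Cl_p(k)^j) = p^a$ forces the module $M$ to be small enough that the descending filtration by powers of $(1-\sigma)$ collapses onto the norm-related submodules. First I would translate everything into the language of the filtration $M_k = \{m : m^{(1-\sigma)^k} = 1\}$ and the layers $M^{(k)} = \{m : m^{p^k}=1\}$. The transfer image $\Cl_p(k)^j$ sits inside $M$ as the part on which $\sigma$ acts trivially together with the capitulation data, and the capitulation kernel $\kappa_{K/k}$ of rank $\gamma$ records exactly how $\Cl_p(k)$ fails to inject.

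\medskip

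The main computation I expect is to identify $\Cl_p(K)^p$ and $\Cl_p(k)^j$ as two specific terms in Inaba's filtration and then show they coincide under the index bound. I would use the polynomial identities \eqref{E1} and \eqref{E2}: writing $p = (1-\sigma)^{p-1} f(\sigma) + \nu g(\sigma)$ shows that $M^p$ is controlled by the simultaneous action of $(1-\sigma)^{p-1}$ and the algebraic norm $\nu = j \circ N$. The relation $\nu = (1-\sigma)^{p-1} + p h(\sigma)$ is what links the ``algebraic norm'' piece (whose image is essentially $\Cl_p(k)^j$, since $\nu = j \circ N$ and $N$ is surjective by ramification as in \eqref{CD0}) to the top of the $(1-\sigma)$-filtration. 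So on $M^p$ the two operators $\nu$ and $(1-\sigma)^{p-1}$ agree, which is precisely the bridge needed to prove $\Cl_p(k)^j = \Cl_p(K)^p$.

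\medskip

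The strategy for the inequality is then the following. One inclusion, $\Cl_p(k)^j = M^\nu \supseteq$ something, should be nearly formal from \eqref{E2} together with the fact that $\nu$ applied to $M^p$ lands in $\Cl_p(k)^j$; roughly $M^p = M^{(1-\sigma)^{p-1}} \cdot (\text{norm part})$ and the norm part is $\Cl_p(k)^j$ up to the $p h(\sigma)$-correction, which is itself a $p$-th power. The reverse inclusion is where the index hypothesis does the work: I would invoke property (3) of Inaba, namely $\# M_1 \le (M : M^p) \le (\# M_1)^{p-1}$ and the refinement $(M:M^p) \ge p^{p-2}\, \# M_1$ when $M^p \ne 1$, to pin down $(M:M^p)$. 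Comparing this with the given index $(M : \Cl_p(k)^j) = p^a$ and the bound $a \le p-2+\gamma$, together with $\rank \kappa_{K/k} = \gamma$ feeding into $\# M_1$, should squeeze the two subgroups $\Cl_p(k)^j$ and $M^p$ to have equal order; since one contains the other, they are equal.

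\medskip

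The hard part will be the bookkeeping that identifies $\gamma = \rank \kappa_{K/k}$ with the exact defect by which $\# M_1$ exceeds the naive expectation, and correctly locating $\Cl_p(k)^j$ inside the Inaba filtration relative to $M^\nu$. In particular I must be careful that $j \circ N = \nu$ makes $\Cl_p(k)^j$ literally the image $M^\nu$ only after accounting for the kernel of $j$, which is governed by capitulation; this is where $\gamma$ enters and why the bound on $a$ is shifted by exactly $\gamma$. Getting the $p^{p-2}$ threshold from property (3) to line up with $p-2+\gamma$ rather than $p-2$ is the delicate point, but once the filtration terms are correctly matched the conclusion $\Cl_p(k)^j = \Cl_p(K)^p$ is immediate.
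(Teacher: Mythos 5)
Your general framework is the right one (Inaba's theory for the $G$-module $M=\Cl_p(K)$, the identities \eqref{E1}, \eqref{E2}, and the identification $M^\nu = \Cl_p(k)^j$ via surjectivity of the norm), but the engine of your argument is broken. You propose to pin down $(M:M^p)$ by invoking Inaba's property (3), i.e.\ $\#\,M_1 \le (M:M^p) \le (\#\,M_1)^{p-1}$ and the refinement $(M:M^p)\ge p^{p-2}\,\#\,M_1$. That property is only valid under the hypothesis $M^\nu = 1$, and for $M = \Cl_p(K)$ one has $M^\nu = \Cl_p(k)^j$, which is nontrivial in exactly the interesting cases (when $\Cl_p(k)^j=1$ the theorem is a statement about $\gamma = r$ and much weaker). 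In the paper, property (3) is reserved for the submodule $\nCl_p(K)=\ker\nu$ (Prop.\ \ref{PN}), where its hypothesis genuinely holds; it cannot be applied to $\Cl_p(K)$ itself. Your counting strategy (``squeeze the two subgroups to have equal order'') therefore never gets off the ground, and neither of your two inclusions is actually established: in particular ``on $M^p$ the operators $\nu$ and $(1-\sigma)^{p-1}$ agree'' is false as stated --- the difference $\nu-(1-\sigma)^{p-1}=p\,h(\sigma)$ kills elements of order $p$, not $p$-th powers.

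The missing idea is the claim that $(1-\sigma)^{p-1}$ annihilates all of $M$, and it is proved by contradiction using properties (1) and (2) of the filtration together with Prop.\ \ref{P1Am}. If $M_{p-1}\ne M$, then by (1) the filtration is strictly increasing up to step $p$, and by (2) each index $(M_{j+1}:M_j)$ is at least $p$, whence $\#\,M \ge p^{p-1}\,\#\,M_1$. Now $M_1 = \pAm(K/k)$, and Prop.\ \ref{P1Am} gives $\#\,M_1 \ge \#\,\Cl_p(k)$; since $\ker j = \kappa_{K/k}$ is elementary of rank $\gamma$, one has $\#\,\Cl_p(k) = p^\gamma\,\#\,\Cl_p(k)^j$, so $\#\,M \ge p^{p-1+\gamma}\,\#\,\Cl_p(k)^j$, contradicting $a \le p-2+\gamma$. (Note this is also the correct way $\gamma$ enters: not through any subtle mismatch between $M^\nu$ and $\Cl_p(k)^j$ --- the image of $j$ is $\Cl_p(k)^j$ no matter what $\ker j$ is --- but purely through the order count $\#\,\kappa = p^\gamma$.) Once $(1-\sigma)^{p-1}$ kills $M$, the identities \eqref{E1} and \eqref{E2} immediately give $M^p \subseteq M^\nu$ and $M^\nu \subseteq M^p$, hence $M^p = M^\nu = \Cl_p(k)^j$, with no order comparison needed.
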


\begin{proof}
Put $M = \Cl_p(K)$. We claim that $M^{(1-\sigma)^{p-1}} = 1$.
In fact, assume that this is false. Then $M_{p-1} \ne M$, hence
$p \le (M_p:M_{p-1}) \le \ldots \le (M_2:M_1)$; this shows
$\#\, M \ge (M:M_{p-1})\cdots (M_2:M_1) \#\,M_1 \ge p^{p-1}\#\,M_1$.
Since $M_1 = \pAm(K/k)$, we get $\#\,  \Cl_p(K) \ge p^{p-1}
\#\,  \Cl_p(k) = p^{p-1+\gamma}\#\,  \Cl_p(k)^j$,
where we have used Prop. \ref{P1Am}.

Thus $(1-\sigma)^{p-1}$ kills $M$, and \eqref{E1} and \eqref{E2} 
imply that $M^p = M^\nu$. Since $M^\nu = \Cl_p(k)^j$, the
claim follows. 
\end{proof}

We now define the subgroup $\nCl_p(K)$ by the exact sequence
\begin{equation}\label{CD1}
\begin{CD} 1 @>>> \nCl_p(K) @>>> \Cl_p(K) 
			@>\nu>> \Cl_p(k)^j @>>> 1. \end{CD}
\end{equation}
In other words: $\nCl_p(K)$ is the subgroup of $\Cl_p(K)$
killed by the algebraic norm $\nu$.

\begin{prop} \label{PN}
If $\rank \Cl_p(k)^j \ge \rank \Cl_p(K) - (p-3)$, then
$\nCl_p(K)$ is elementary abelian.
\end{prop}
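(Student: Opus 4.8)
The plan is to exploit the Inaba structure theory for the $G$-module $M = \Cl_p(K)$, exactly as in the proof of Theorem \ref{P3}, but now extracting information about the submodule $\nCl_p(K) = \{c \in M : c^\nu = 1\} = \ker\nu$ rather than about $M^p$. The key observation is that $\nCl_p(K)$ carries its own $G$-action (it is $G$-stable, since $\nu$ is central), and on it the algebraic norm $\nu$ acts trivially; so Inaba's item (3), which applies precisely to modules $N$ with $N^\nu = 1$, is available for $N = \nCl_p(K)$. The task is to show that the rank hypothesis $\rank \Cl_p(k)^j \ge \rank M - (p-3)$ forces $N^p = 1$, i.e. that $N$ is elementary abelian.

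First I would translate the rank hypothesis into a statement about the filtration $M_k = \{m : m^{(1-\sigma)^k}=1\}$. From the exact sequence \eqref{CD1} we have $\rank M = \rank N + \rank \Cl_p(k)^j$ (ranks are additive on the sequence since $\Cl_p(k)^j$ is the image of $\nu$), so the hypothesis reads $\rank N \le p-3$. On the other hand $M_1 = \pAm(K/k)$ has $p$-rank at least that of $N_1 = N \cap M_1$, and Inaba's chain of inequalities (2), namely $(M_n:M_{n-1}) \le \cdots \le (M_2:M_1) \le \#\,M_1$, controls how quickly the module can grow under iterated application of $(1-\sigma)$. The idea is that a module killed by $\nu$ but \emph{not} by $p$ is forced by item (3) to have a long $(1-\sigma)$-filtration: if $N^p \ne 1$ then $N_{p-1} \ne N$, so the filtration $N_1 \subsetneq N_2 \subsetneq \cdots$ reaches all the way to $N_{p-1}$ and beyond, and each step contributes at least one to the rank. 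I would then count: a nontrivial step at each level from $1$ up to $p-1$ produces $\rank N \ge p-2$, contradicting $\rank N \le p-3$.

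The main step to make precise is the passage from item (3) to a lower bound on $\rank N$, because Inaba's statements are phrased in terms of the indices $(M_{k+1}:M_k)$ and $\#\,M_1$, not directly in terms of ranks. Here I would argue that since $N^\nu = 1$, relations \eqref{E1} and \eqref{E2} give $N^p = N^{(1-\sigma)^{p-1}}$ up to the norm contribution, so $N^p \ne 1$ is equivalent to $N^{(1-\sigma)^{p-1}} \ne 1$, i.e. to $N_{p-1} \ne N$. When $N_{p-1} \ne N$ the quotients $N_{k+1}/N_k$ are all nontrivial for $0 \le k \le p-2$ (using item (1): $N_j = N_{j+1}$ only when $N_j = N$), and each nontrivial quotient of a $G$-module of exponent-$p$ type adds at least $1$ to the $p$-rank of $N$. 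Summing the $p-1$ nontrivial steps $N_1, N_2/N_1, \ldots, N_{p-1}/N_{p-2}$ yields $\rank N \ge p-2$.

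The delicate point — and the place I expect the real obstacle — is justifying that each successive quotient $N_{k+1}/N_k$ genuinely increases the $p$-rank, rather than merely the order, of $N$. The multiplication-by-$(1-\sigma)$ map induces injections $N_{k+1}/N_k \hra N_k/N_{k-1}$, so the successive quotients have \emph{non-increasing} order; but to turn "nontrivial at every level up to $p-1$" into a rank bound I must verify that the bottom layer $N_1$, together with the fact that $(1-\sigma)$ raises each $N_{k+1}/N_k$ injectively into the one below, forces $\rank N \ge \#\{k : N_{k+1}/N_k \ne 1\}$. I would handle this by choosing, for each level $k$ with a nontrivial quotient, an element realizing that quotient and checking that these elements are $\F_p$-independent in $M/M^p$ — the independence following from the strict filtration by powers of $(1-\sigma)$. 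Once the rank count gives $\rank N \ge p-2$, the contradiction with the hypothesis $\rank N \le p-3$ forces $N^p = 1$, which is the assertion that $\nCl_p(K)$ is elementary abelian.
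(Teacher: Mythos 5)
Your choice of machinery is the right one---Inaba's structure theory does apply to $N = \nCl_p(K)$ because $N^\nu = 1$, and your equivalence ``$N^p \ne 1 \Leftrightarrow N_{p-1} \ne N$'' via \eqref{E1} and \eqref{E2} is sound---but the very first step of your reduction is false, and the intended contradiction collapses with it. Ranks are \emph{not} additive in the exact sequence \eqref{CD1}: for finite abelian $p$-groups one only has subadditivity $\rank \Cl_p(K) \le \rank N + \rank \Cl_p(k)^j$, and equality genuinely fails (consider $1 \to p\Z/p^2\Z \to \Z/p^2\Z \to \Z/p\Z \to 1$, where all three ranks equal $1$). The paper's own closing example realizes this failure: for $k = \Q(\sqrt{-31}\,)$, $K$ the sextic subfield of $\Q(\zeta_{31})$, $p = 3$ and minus parts, one has $\Cl_3^-(K) \simeq \Z/9\Z$ and $\Cl_3^-(k)^j \simeq \Z/3\Z$, hence $N \simeq \Z/3\Z$; the hypothesis of Prop.~\ref{PN} holds ($1 \ge 1 - 0$), yet your translated hypothesis ``$\rank N \le p-3 = 0$'' is violated. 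So the hypothesis simply cannot be converted into an upper bound on $\rank N$ alone; when $\rank \Cl_p(k)^j$ is large, the hypothesis is compatible with $\rank N$ being arbitrarily large, and a proof that $N^p \ne 1$ forces $\rank N \ge p-2$ then produces no contradiction.

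The repair is to keep the bottom layer $N_1$ in the lower bound instead of discarding it, which is exactly what the paper does. Inaba's item (3), applied to $N$ (legitimate since $N^\nu = 1$), gives in the case $N^p \ne 1$ the inequality $(N:N^p) \ge p^{p-2}\,\#\,N_1$, i.e. $\rank N \ge (p-2) + \rank N_1$. Now the $p$-torsion of $\Cl_p(k)^j$ is killed by $1-\sigma$ and by $\nu$ (on $j$-images, $\nu$ acts as the $p$-th power map, since $N \circ j$ is multiplication by $p$), so it lies in $N_1$; as the $p$-torsion of a finite abelian $p$-group has the same rank as the group itself, this yields $\rank N_1 \ge \rank \Cl_p(k)^j$. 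Combining with the trivial inclusion $N \subseteq \Cl_p(K)$,
$$ \rank \Cl_p(K) \ \ge\ \rank N \ \ge\ (p-2) + \rank \Cl_p(k)^j, $$
which contradicts the hypothesis directly, with no appeal to additivity. Note the two structural differences from your plan: the comparison with $\rank \Cl_p(K)$ is made through the inclusion $N \subseteq \Cl_p(K)$, not through the quotient in \eqref{CD1}; and $\rank \Cl_p(k)^j$ enters through $N_1$, not through the hypothesis. (Incidentally, the point you flagged as delicate---converting ``nontrivial filtration steps'' into rank---is settled cleanly by Inaba's identity $N^{(1)} = N_{p-1}$: the entire strict filtration lies inside the $p$-torsion of $N$, so counting orders there \emph{is} counting rank; no choice of independent representatives in $M/M^p$ is needed.)
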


\begin{proof}
Put $M = \nCl_p(K)$; then $\Cl_p(k)^j \subseteq M_1$, since
$\Cl(k)^j$ is clearly killed by $1-\sigma$. If $M^p \ne 1$, then
$(M:M^p) \ge p^{p-2} \# M_1 $ shows that $\rank \Cl_p(K)
\ge \rank M \ge p-2 + \rank M_1 \ge  p-2 + \rank \Cl(k)^j$,
which contradicts our assumption.
\end{proof}

\begin{thm}\label{TBU}
If $p \ge 3$ and $\rank \Cl_p(k)^j = \rank \Cl_p(K)$, 
then $\Cl_p(k)^j = \Cl_p(K)^p$.
\end{thm}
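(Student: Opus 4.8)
The plan is to reduce, just as in the proof of Theorem \ref{P3}, to the assertion that $(1-\sigma)^{p-1}$ annihilates $M := \Cl_p(K)$; once this is known, relations \eqref{E1} and \eqref{E2} give $\Cl_p(K)^p = M^\nu = \Cl_p(k)^j$ exactly as there. So the whole task is to exploit the rank hypothesis to show that $1-\sigma$ is almost trivial on $M$. Throughout I write $\nu = 1+\sigma+\dots+\sigma^{p-1}$, recall $M^\nu = \Cl_p(k)^j$, and use the identities $(1-\sigma)\nu = 1-\sigma^p = 0$ and $\nu\sigma = \nu$.

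First I would reinterpret the hypothesis. Since $M^\nu$ is a subgroup of $M$ with $\rank M^\nu = \rank M$, comparison of $p$-torsion forces $M^\nu[p]=M[p]$, i.e. $\pCl(K)\subseteq M^\nu$. As $(1-\sigma)\nu = 0$ we have $M^\nu\subseteq\ker(1-\sigma)$, so $1-\sigma$ annihilates $\pCl(K)$. Now by \eqref{E2} we have $\nu = (1-\sigma)^{p-1}+ph$, and both summands kill the $p$-torsion $\pCl(K)$ (the first because $1-\sigma$ does, the second because $p$ does); hence $\pCl(K)\subseteq\nCl_p(K)=\ker\nu$. On the other hand the equality $\rank M^\nu=\rank M$ certainly meets the hypothesis of Proposition \ref{PN} (here $p\ge 3$ is used), so $\nCl_p(K)$ is elementary abelian and therefore contained in $\pCl(K)$. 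Thus $\nCl_p(K)=\pCl(K)$: the algebraic norm $\nu$ and the $p$-th power map have the same kernel on $M$.

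The crux is then to convert this coincidence of kernels into triviality of the Galois action on $\Cl_p(K)^p$. Both $\nu$ and the $p$-th power map carry $M$ onto $M^\nu$ and $M^p$ with common kernel $\pCl(K)$, so $c^p\mapsto c^\nu$ is a well-defined isomorphism $M^p\to M^\nu$. Replacing $c$ by $c^\sigma$ leaves the target value $c^\nu$ unchanged, because $\nu\sigma=\nu$, while it sends $c^p$ to $(c^p)^\sigma$; injectivity of the isomorphism then forces $(c^p)^\sigma=c^p$. Hence $\sigma$ fixes every element of $M^p=\Cl_p(K)^p$, which means $(1-\sigma)M\subseteq\pCl(K)$. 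Combining this with the fact (from the previous paragraph) that $1-\sigma$ kills $\pCl(K)$, I get $(1-\sigma)^2M=1$; and since $p\ge 3$ this upgrades to $(1-\sigma)^{p-1}M=1$, at which point the argument finishes as in Theorem \ref{P3}.

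The hard part is precisely the middle passage: pinning down $\nCl_p(K)$ as the \emph{full} $p$-torsion $\pCl(K)$ (which is exactly where the rank equality and Proposition \ref{PN} are both indispensable), and then turning the equality of kernels $\ker\nu=\ker(p)$ into the statement that $\sigma$ acts trivially on $\Cl_p(K)^p$. By contrast, the initial reduction to $(1-\sigma)^{p-1}M=1$ and the concluding appeal to \eqref{E1} and \eqref{E2} are routine once Theorem \ref{P3} is in hand.
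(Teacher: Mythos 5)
Your proof is correct, and its skeleton is the paper's: both arguments pivot on Proposition \ref{PN}, applied under the rank hypothesis (this is where $p \ge 3$ enters), to show that $\nCl_p(K)$ is elementary abelian and then to identify it with the full $p$-torsion. But the paper's proof is three lines long and is silent at exactly the two places where you do real work, and there your route genuinely differs. First, the paper obtains the full rank of $\nCl_p(K)$ from the chain $\pCl(k)^j \subseteq \nCl_p(K) \subseteq \Cl_p(K)$; for this to give the desired conclusion one needs $\rank \pCl(k)^j = \rank \Cl_p(k)^j$, which is not automatic, since the transfer $j$ may have a kernel: if, say, $\Cl_p(k)$ were cyclic of order $p^2$ with exactly its $p$-torsion capitulating, then $\pCl(k)^j$ would have smaller rank than $\Cl_p(k)^j$. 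You avoid this entirely by working with $M^\nu = \Cl_p(k)^j$ as a subgroup of $M = \Cl_p(K)$: the rank equality gives $M[p] = M^\nu[p] \subseteq M^\nu \subseteq \ker(1-\sigma)$, hence $\sigma$ is trivial on $M[p]$ and $M[p] \subseteq \ker\nu$; together with Proposition \ref{PN} this pins down $\ker\nu = \nCl_p(K) = M[p]$. This is more robust than the paper's own step. Second, where the paper says only ``now our claim follows,'' you supply the ending: the equality of kernels makes $c^p \mapsto c^\nu$ an isomorphism $M^p \to M^\nu$, and $\nu\sigma = \nu$ then forces $\sigma$ to fix $M^p$ pointwise, giving $(1-\sigma)^2 M = 1$, hence $(1-\sigma)^{p-1}M = 1$ for $p \ge 3$, and \eqref{E1}, \eqref{E2} yield $\Cl_p(K)^p = M^\nu = \Cl_p(k)^j$. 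One streamlining is available to you here: the isomorphism trick is more than you need, since $\nu(1-\sigma) = 0$ gives $(1-\sigma)M \subseteq \ker\nu$ for free, and combined with $\ker\nu = M[p]$ and the triviality of $\sigma$ on $M[p]$ this already yields $(1-\sigma)^2M = 1$.
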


\begin{proof}
Since $\pCl(k)^j \subseteq \nCl_p(K) \subseteq \Cl_p(K)$, our 
second assumption implies that $\rank \nCl_p(K) = r$.
Thus all groups in the exact sequence (\ref{CD1})
have the same $p$-rank, and now our claim follows since
$\nCl_p(K)$ is elementary abelian by Prop. \ref{PN}.
\end{proof}

For cyclic extensions $K/\Q$, already Moriya \cite{Mor} noticed 
that $\Cl_p(K)$ cannot be cyclic if $\#\, \Cl_p(K) \ge p^2$. 
This was generalized by {\sc Guerry} (\cite{Gue}, Theorem I.9):

\begin{cor} 
If $K/k$ is a cyclic $p$-extension ($p \ge 3$), then 
$\Cl_p(K)$ is cyclic and nontrivial if and only if
$\Cl_p(K)/\Cl_p(k)^j \simeq \Z/p\Z$. 
\end{cor}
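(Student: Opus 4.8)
The plan is to reduce both implications to the single identity $\Cl_p(k)^j = \Cl_p(K)^p$, after which a finite cyclic $p$-group does the rest: such a group is nontrivial with $M/M^p \simeq \Z/p\Z$ precisely when it has $p$-rank one. Throughout I write $M = \Cl_p(K)$ and use the identification $\Cl_p(k)^j = M^\nu$ from the exact sequence (\ref{CD1}), where $\nu = 1 + \sigma + \ldots + \sigma^{p-1}$ and $G = \la \sigma \ra = \Gal(K/k)$.

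For the implication $\Leftarrow$ I would simply invoke Theorem \ref{P3}. If $\Cl_p(K)/\Cl_p(k)^j \simeq \Z/p\Z$, then $(\Cl_p(K):\Cl_p(k)^j) = p = p^1$; since $\gamma \ge 0$ and $p \ge 3$ we have $1 \le p-2+\gamma$, so Theorem \ref{P3} applies with $a = 1$ and yields $\Cl_p(k)^j = \Cl_p(K)^p$. Hence $\Cl_p(K)/\Cl_p(K)^p \simeq \Z/p\Z$ has order $p$, forcing $\rank \Cl_p(K) = 1$; thus $\Cl_p(K)$ is cyclic, and having a quotient of order $p$ it is nontrivial.

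For $\Rightarrow$ the crux is to show that $(1-\sigma)^{p-1}$ annihilates $M$ whenever $M$ is cyclic. Here $\sigma$ acts on $M \simeq \Z/p^n\Z$ as multiplication by a unit $u \in (\Z/p^n\Z)^\times$ with $u^p = 1$; such a unit satisfies $u \equiv 1 \bmod p^{n-1}$, since the order-$p$ elements form the unique subgroup of order $p$ in the cyclic group $1+p\Z/p^n\Z$. Consequently $(1-\sigma)M = (u-1)M \subseteq p^{n-1}M$, the unique subgroup of order $p$, on which $\sigma$ necessarily acts trivially; therefore $(1-\sigma)^2 = 0$ on $M$, and as $p \ge 3$ this gives $(1-\sigma)^{p-1} = 0$. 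Once $(1-\sigma)^{p-1}$ kills $M$, the closing lines of the proof of Theorem \ref{P3} apply verbatim: relations (\ref{E1}) and (\ref{E2}) force $M^\nu = M^p$, i.e. $\Cl_p(k)^j = \Cl_p(K)^p$, and since $M$ is cyclic and nontrivial we conclude $\Cl_p(K)/\Cl_p(k)^j = M/M^p \simeq \Z/p\Z$.

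I expect the annihilation $(1-\sigma)^{p-1} = 0$ on cyclic modules to be the only real step; everything else is a citation of Theorem \ref{P3} or bookkeeping already performed in its proof. This is also precisely where the hypothesis $p \ge 3$ is used: for $p = 2$ one only obtains $(1-\sigma)M \subseteq p^{n-1}M$ rather than $(1-\sigma) = 0$, so $M^\nu = M^p$ may fail and the equivalence does not persist.
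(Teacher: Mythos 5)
Your proof is correct, but the forward direction takes a genuinely different route from the paper. The paper's proof of $\Rightarrow$ splits into two cases: if $\Cl_p(k)^j \ne 1$ it notes that $\Cl_p(k)^j$ and $\Cl_p(K)$ both have rank $1$ and invokes Theorem \ref{TBU} (which rests on Prop.\ \ref{PN} and Inaba's structure theory), while if $\Cl_p(k)^j = 1$ it argues separately from Inaba's inequality $(M:M^p) \ge p^{p-2}\,\#\,M_1$ that $M \simeq \Z/p\Z$. You instead prove directly that cyclicity of $M = \Cl_p(K)$ forces $(1-\sigma)^2 = 0$ on $M$: since $\sigma$ acts as a unit $u \in (\Z/p^n\Z)^\times$ with $u^p = 1$, and for odd $p$ the $p$-part of this unit group is cyclic, you get $u \equiv 1 \bmod p^{n-1}$, hence $(1-\sigma)M$ lies in the unique subgroup of order $p$, on which $\sigma$ acts trivially; then $(1-\sigma)^{p-1} = 0$ and the identities \eqref{E1}, \eqref{E2} give $M^\nu = M^p$ exactly as at the end of the proof of Theorem \ref{P3}. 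This is more elementary and self-contained (no case distinction, no appeal to Theorem \ref{TBU} or to Inaba's filtration for this direction), it treats the cases $\Cl_p(k)^j = 1$ and $\Cl_p(k)^j \ne 1$ uniformly, and it isolates exactly where $p \ge 3$ enters; what it costs is that it essentially reproves, by hand for cyclic modules, the annihilation statement that the paper's machinery delivers in general. The backward direction (Theorem \ref{P3} with $a = 1$) is identical to the paper's. One small quibble with your closing aside: for $p = 2$ one does not even obtain $(1-\sigma)M \subseteq 2^{n-1}M$ --- the unit $u = -1$ satisfies $u^2 = 1$ but gives $(1-\sigma)M = 2M$ --- though this does not affect your proof, which only concerns odd $p$.
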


\begin{proof}
Assume that $\Cl_p(K)$ is cyclic. If $\Cl_p(k)^j \ne 1$,
then $\Cl_p(K)$ and $\Cl_p(k)^j$ have the same rank (i.e. $1$),
and Theorem \ref{TBU} proves our claim. Assume therefore
that $\Cl_p(k)^j = 1$. Then we have $\Cl_p(K) \simeq \Z/p\Z$
by {\sc Inaba}'s results: put $M = \Cl_p(K)$ and observe 
that $M^\nu = 1$; if $M^p$ were non-trivial, then 
$(M:M^p) \ge p^{p-2} \# M_1 \ge p^2$ (since $M \ne 1$ 
implies $M_1 \ne 1$) shows that $M$ would have rank at 
least $2$, contradicting our assumption. Thus $M^p = 1$, 
and our claim follows.

For the other direction, assume that $\Cl_p(K)/\Cl_p(k)^j \simeq \Z/p\Z$.
Then $a = 1$ in Theorem \ref{P3}, so $\Cl_p(k)^j = \Cl_p(K)^p$
and $(\Cl_p(K):\Cl_p(K)^p) = (\Cl_p(K):\Cl_p(k)^j) = p$, and
$\Cl_p(K)$ is cyclic and non-trivial.
\end{proof}

\begin{rem}\label{R1}
For odd primes $p$, all the results in this section hold with 
$\Cl(K)$, $\Cl_p(K)$ etc. replaced by the corresponding minus 
class groups $\Cl^-(K)$, $\Cl_p^-(K)$ etc. This follows at once 
from the following proposition, which shows that there is
an exact sequence for the minus part of class groups 
corresponding to (\ref{CD0}).
\end{rem}

\begin{prop}
Let $K/k$ be a cyclic extension of CM-fields which is 
completely ramified. Then the following sequence is exact:
$$ \begin{CD}  
1 @>>> \NCl_p^-(K)  @>>> \Cl_p^-(K)  @>N>> \Cl_p^-(k)  @>>> 1.
\end{CD}$$
\end{prop}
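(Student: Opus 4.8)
\medskip

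The plan is to deduce the minus-part sequence from the sequence (\ref{CD0}), which has already been established for the full $p$-class groups, by applying the minus-part idempotent and using that $p$ is odd. The first point is that (\ref{CD0}) is an exact sequence of $\Z[J]$-modules, where $J$ denotes complex conjugation. Indeed, since $K$ and $k$ are CM-fields, $J$ acts as an automorphism on each, and complex conjugation on a CM-field is central in the automorphism group; in particular $J$ commutes with every $\sigma \in G = \Gal(K/k)$. Consequently the norm $N = \prod_{\sigma \in G}\sigma$ satisfies $N(c^J) = N(c)^J$, so $N \colon \Cl_p(K) \to \Cl_p(k)$ is $J$-equivariant and $\NCl_p(K) = \ker N$ is a $J$-submodule.

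Next I would record that, because $p$ is odd, the group-ring elements $e^{\pm} = \frac{1}{2}(1 \pm J)$ act as orthogonal idempotents on every finite abelian $p$-group carrying a $J$-action (here $\frac{1}{2}$ means multiplication by the inverse of $2$ modulo the exponent of the group), giving a functorial direct-sum decomposition $M = M^{e^+} \times M^{e^-}$ with $M^{e^-} = M^-$ the minus part. Since a $J$-equivariant homomorphism respects this splitting, passing to the minus part $M \mapsto M^-$ is an exact functor. Applying it to (\ref{CD0}) yields at once the desired exact sequence, with $\NCl_p^-(K) = (\NCl_p(K))^-$ identified as the kernel of $N$ on $\Cl_p^-(K)$.

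If one prefers to argue surjectivity by hand rather than invoke the exact functor, it suffices to take $c \in \Cl_p^-(k)$, lift it to some $d \in \Cl_p(K)$ with $N(d) = c$ using surjectivity in (\ref{CD0}), and write $d = d^{e^+} d^{e^-}$. Then $N(d^{e^+}) \in \Cl_p^+(k)$ and $N(d^{e^-}) \in \Cl_p^-(k)$; since $N(d) = c$ lies in $\Cl_p^-(k)$, uniqueness of the plus/minus decomposition forces $N(d^{e^-}) = c$, so $d^{e^-} \in \Cl_p^-(K)$ is the required preimage. I do not expect any real obstacle here: the only essential hypothesis is that $p$ be odd, which is precisely what makes $e^-$ available and is the reason Remark \ref{R1} is stated for odd primes; the sole point needing care is the $J$-equivariance of $N$, which rests on the centrality of complex conjugation in CM-extensions.
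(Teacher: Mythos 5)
Your proposal is correct and follows essentially the same route as the paper: the paper also reduces everything to surjectivity of $N$ on the minus parts, lifts $c \in \Cl_p^-(k)$ to a class $C'$ via the surjectivity in (\ref{CD0}), and applies the idempotent $(1-J)/2$ to obtain a minus-part preimage, exactly as in your ``by hand'' paragraph. Your explicit remarks on the $J$-equivariance of $N$ (centrality of complex conjugation) and on the meaning of $\frac{1}{2}$ on odd $p$-groups only make precise what the paper leaves implicit.
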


\begin{proof}
We only have to show that the norm $N:\Cl_p^-(K) \lra \Cl_p^-(k)$
is surjective. To this end, take a class $c \in \Cl_p^-(k)$, and let
$J$ denote complex conjugation; then
there is an ideal class $C' \in \Cl_p(K)$ such that $c = NC'$. 
But $c \in \Cl_p^-(k)$ implies $c = c^{(1-J)/2)}$ since
$J$ acts as $-1$, and we get
$c = c^{(1-J)/2)} = NC$ for $C = {C'}^{(1-J)/2)}$.
Moreover, $C \in \Cl_p^-(K)$ since it is killed by $1+J$. 
\end{proof}

\noindent{\bf Example.} 
Take $k = \Q(\sqrt{-31}\,)$, and let $K$ be
the sextic subfield of $\Q(\zeta_{31})$. Then 
$\Cl^-(k) \simeq \Z/3\Z$, and the  first condition of 
Theorem \ref{Cap} shows that no class of $\Cl^-(k)$ 
capitulates in $K$. By Prop. \ref{P2.1} we have 
$h^-(K) \equiv 0 \bmod 9$; the tables in \cite{Wa} 
actually show that $h^-(K) = 9$, hence Theorem \ref{P3} 
(applied to the minus part) shows that $\Cl^-(K) \simeq \Z/9\Z$.

\end{document}